\pgfplotsset{compat=1.14}
\numberwithin{equation}{section}
\newcommand{\R}{{\mathbb R}}
\def\eps{\eps}
\def\cN{{\mathcal N}}
\def\A{{\mathcal A}}
\newcommand{\LL}{\mathbf L}
\newcommand{\PP}{\mathbf \Phi}
\newcommand{\DD}{\mathbf D}
\newcommand{\xx}{\mathbf x}
\newcommand{\vv}{\mathbf v}
\newtheorem{defi}{Definition}
\newtheorem{lem}{Lemma}
\newtheorem{prop}{Proposition}
\newtheorem{theo}{Theorem}
\newtheorem{rem}{Remark}
\def\dd{\mathrm d}
\def\dt{\Delta t}
\begin{document}
 
\title{Density-induced Consensus Protocol}

\author{Piotr Minakowski}

\address{Institute of Analysis and Numerics, Otto-von-Guericke
    University Magdeburg, \\ Universit\"atsplatz 2, 39106 Magdeburg,
    Germany,\\
    piotr.minakowski@ovgu.de}

\author{Piotr B. Mucha}
\address{Institute of Applied Mathematics and Mechanics, University of Warsaw, \\ ul. Banacha 2, 02-097 Warszawa, Poland \\ 
    p.mucha@mimuw.edu.pl }

\author{Jan Peszek\footnote{Corresponding author}}
\address{Institute of Applied Mathematics and Mechanics, University of Warsaw, \\ ul. Banacha 2, 02-097 Warszawa, Poland \\ 
    j.peszek@mimuw.edu.pl }

\maketitle

\begin{history}
	\received{(Day Month Year)}
	\revised{(Day Month Year)}
	\comby{(xxxxxxxxxx)}
\end{history}

\begin{abstract}
The paper introduces a model of collective behavior where agents receive 
information only from sufficiently dense crowds in their immediate vicinity.
The system is an asymmetric, density-induced version of the
Cucker-Smale model with short-range interactions. 
We prove the basic mathematical properties of the system and concentrate 
on the presentation of interesting behaviors of the solutions. 
The results are illustrated by numerical simulations.
\end{abstract}

\keywords{collective dynamics; local interactions; consensus; Cucker-Smale-type systems; second order dynamics.}

\ccode{AMS Subject Classification: }

\section{Introduction}

We consider an ensemble of $N$ agents with $(x_i(t),v_i(t))\in \R^{2d}$ denoting the 
opinion/position and the tendency/velocity of $i$th agent at the time $t\geq 0$. 
The agents follow the density-induced consensus protocol (DI)

\begin{subnumcases}{\label{eq1}}
    \dot x_i=v_i, & $x_i(0) = x_{i0}\in\R^d$, \label{eq1a}\\
    \dot v_i=\displaystyle \sum_{k\in\cN_i}M(v_k-v_i), & $v_i(0) = v_{i0}\in\R^d$, \label{eq1b}
\end{subnumcases}

\noindent
where $\cN_i$ is a neighbor set of $i$th agent defined through the following relation: given positive parameters $\delta, m$ and  $h$, for $t\geq h$ we define
\begin{equation}\label{nor}
\begin{split}
k\in \cN_i(t)\ \Leftrightarrow \ & x_k(t-h)\in B(x_i(t-h),\delta)\ \mbox{and}\ \\ & \#\Big\{k\in\{1,...,N\}: x_k(t-h)\in B(x_i(t-h),\delta)\Big\} > m
\end{split}
\end{equation}
where $B(x_i(t-h),\delta)$ is an open ball centered at $x_i(t-h)$ with radius $\delta$. For $t\in [0,h)$ we take
\begin{equation}\label{nor1}
\begin{split}
k\in \cN_i(t)\ \Leftrightarrow \ & x_k(0)\in B(x_i(0),\delta)\ \mbox{and}\ \\ & \#\Big\{k\in\{1,...,N\}: x_k(0)\in B(x_i(0),\delta)\Big\} > m.
\end{split}
\end{equation}

Parameter $0<h$, negligibly small compared to the rest of parameters, is introduced to ensure that the neighbour  sets $\cN_i(t)$ are well defined.   Indeed, 
taking $h=0$ causes instability of the system if a particle is situated at the boundary of $B(x_i(t),\delta)$.
A natural interpretation of $h$ is a time step from a discrete in time version of (\ref{eq1}). To grasp the 
intuition behind the behaviour, we can view $h$ as $0$ and focus on the qualitative analysis of 
the model, which is the main goal of the paper. Further explanation can be found in Remark \ref{Rh}.
Parameter $M = M(i,N,\#\cN_i)$ is a normalizing factor 
discussed later; to fix our attention we may assume that $M = \kappa/\#\cN_i$, where $\kappa>0$ is 
the non-dimensional coupling strength and $\#\cN_i$ is the number of elements in~$\cN_i$.

Condition \eqref{nor} introduces a two-step verification 
of whether $k$ belongs to the set of neighbors of $i$. First, $x_k(t-h)$ is required to be close 
to $x_i(t-h)$. Second, the crowd density in the immediate vicinity of $i$ needs to be
large enough so that the number of the individuals with $x_k(t-h)\in B(x_i(t-h),\delta)$ is larger than $m$. If the second condition is not satisfied, 
the set of neighbors of $i$ is empty. Observe that the rightmost condition in definition \eqref{nor} is 
asymmetric and consequently so is the relation of adjacency $\rightsquigarrow$ defined by 
$k\rightsquigarrow i \Leftrightarrow k\in\cN_i$. 

Dividing the right-hand side of \eqref{nor} by $\delta^d$ (where $d$ is the dimension of the space) identifies $\frac{m}{\delta^d}$ as a threshold imposed on the empirical density of the particles. Hence, the interaction is induced by sufficiently high density of the agents and the DI protocol operates within the following general paradigm: 

\begin{framed}
 \begin{center}
{\it To influence the opinion of an individual,\\ communication with a sufficiently dense nearby crowd is required}.
  \end{center}
\end{framed}

 \noindent

The protocol is specific to real-life phenomena related to societal dynamics, where individuals do
not interact with separate agents but are highly susceptible to the influence of crowds. It is inspired by such phenomena as emergence of trends in decision-making and viral videos in social media.


Formally \eqref{eq1} is a second order system which in the kinetic formalism determines
the acceleration of particles in terms of their positions and velocities. 
Indeed, it originates from a highly recognizable model of collective behavior, 
the classical Cucker-Smale (CS) system
\begin{subnumcases}{\label{cs}}
    \dot x_i=v_i, & $x_i(0) = x_{i0}\in\R^d$, \label{csa}\\
    \dot v_i=\displaystyle \frac{1}{N}\displaystyle\sum_{k=1}^N\psi(|x_i-x_k|)(v_k-v_i), & $v_i(0) = v_{i0}\in\R^d$, \label{csb}
\end{subnumcases}
with $\psi(s) = (1+s)^{-\alpha}$. 

Comparing the DI and CS models, we observe {\it the first 
prominent feature} of the DI protocol: it leads to the emergence of sharply distinguishable, dense
clusters. Moreover, we observe local flocking, with nontrivial dependence on 
the density; particularly the $time=150$ velocity of singletons varies significantly,  see  Fig.~\ref{fig:1}.

\begin{figure}[h!]
    \setlength{\unitlength}{.1\textwidth}
    \begin{picture}(10,6)
    \put(.5,0){\includegraphics[width=.9\textwidth]{./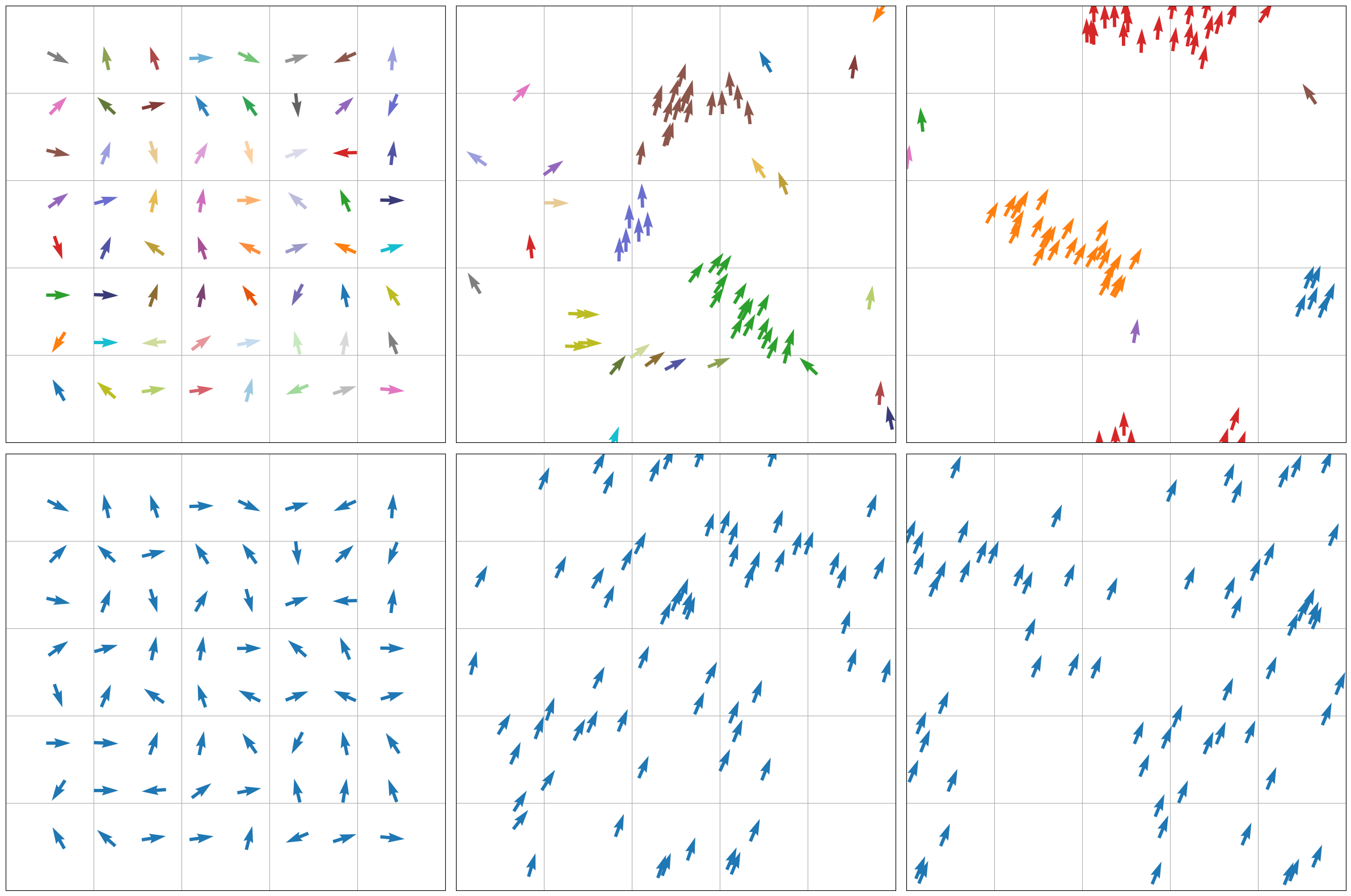}}
    \put(0,5.75){(A)}
    \put(0,2.7){(B)}
    
    \put(1.2,6.1){time = 0.0}
    \put(4.2,6.1){time = 30.0}
    \put(7.2,6.1){time = 150.0}
    
    \end{picture}
    
    \caption{Behaviour of particles for DI (A) and CS (B). Identical initial data lead to sharper 
    clusters for the DI model. The color coding represents clusters of indirect communication - multiple clusters for DI and a single cluster for (global) CS.}
    \label{fig:1}
\end{figure}

{\it The second prominent feature} of the DI model is a structural asymmetry of the interactions 
showcased in Fig. \ref{fig:2}.
In the case (A), at $time=2.0$ the orange singleton already influences the cluster but the cluster 
does not influence the singleton yet, because the number of the particles in the vicinity of the singleton is insufficient 
for it to receive communication.  
Consequently, there is a period of time when the singleton influences the cluster but is yet to be 
affected by the cluster, which results in a particularly strong asymmetry of the interaction. On the 
other hand in the case (B), at $time=2.0$ the orange
singleton is already influenced by the cluster. Therefore, a singleton can change the direction of an 
entire crowd and whether it succeeds depends on the spacial distribution of the crowd itself.
These phenomena are generally impossible to obtain in most well-known models of consensus.

\begin{figure}[h!]
    \setlength{\unitlength}{.1\textwidth}
    \begin{picture}(10,4)
    \put(.5,0){\includegraphics[width=.9\textwidth]{./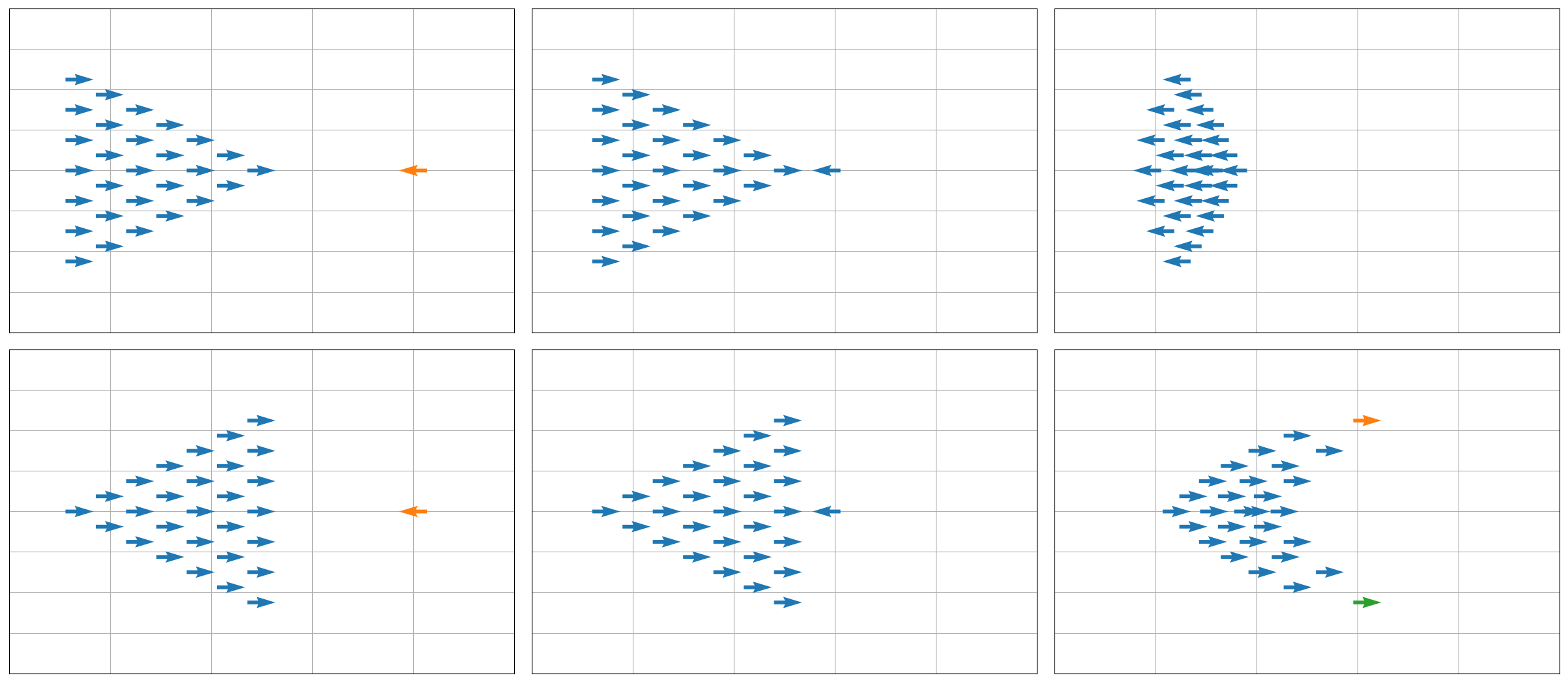}}
    \put(0,3.6){(A)}
    \put(0,1.7){(B)}
    \put(1.25,4){time = 0.0}
    \put(4.25,4.){time = 2.0}
    \put(7.25,4.){time = 30.0}
    \end{picture}
    
    \caption{Interaction between a group and an individual for the DI model. Initially (A) and (B) differ only by the shape of the cluster. In (A) the individual diverts the cluster while in (B) the cluster diverts the individual.}
    \label{fig:2}
\end{figure}

{\bf Notation.} In what follows we use bold symbols for points in $\R^{dN}$ or $\R^{d\# \A}$ space, where a~cluster $\A$ is any subset of $\{1,...,N\}$, namely
\begin{align*}
\xx = (x_{1},...,x_{N})\in \R^{dN}, \quad \vv = (v_1,...,v_N)\in \R^{dN},
 \quad \xx_\A = (x_{i_1},...,x_{i_{\# \A}})\in \R^{d\# \A}.
\end{align*}  
This convention applies for instance to the initial data  $(\xx_0,\vv_0)\in \R^{2dN}$.

Moreover, we introduce averages,  together with their restriction to clusters:
$$\bar \xx = \frac{1}{N}\sum_{i=1}^Nx_i\qquad \bar\vv = \frac{1}{N}\sum_{i=1}^Nv_i,
\qquad \bar \xx_\A=\frac{1}{\# \A}\sum_{i\in \A}x_i,  \qquad \bar \vv_\A=\frac{1}{\# \A}\sum_{i\in \A}v_i.$$

\subsection{Main results}\label{sec:main}
Hereinafter, we fix our attention by referring to the individuals following the DI protocol as 
``particles'' with ``position'' $x_i$ and ``velocity'' $v_i$ (of $i$th particle).
Inter-particle interactions within the DI model highly depend on the local density of the ensemble,
and thus the following notion of densely packed clusters is of significant importance.

\begin{defi}\label{packed}
    We say that the cluster $\A\subset\{1,...,N\}$ is \underline{$r$-densely packed  at $t$} if
    \begin{enumerate}
        \item the Minkowski sum $\xx_\A(t-h)+B(0,r/2)$ is connected,
        \item each open ball $B(x_k(t-h),r)$, for $k\in \A$ contains more than $m$ particles,
    \end{enumerate} 
    with convention that $t-h= 0$ if $t\leq h$.
\end{defi}

\noindent
As we show later in Lemma \ref{con}, interactions between the particles in any $r$-densely packed 
cluster for $r\leq\delta$ are propagated to the entire cluster, which serves as a stepping 
stone to obtain a flocking estimate.

We perform basic qualitative and quantitative analysis of \eqref{eq1}. That includes:
the existence and uniqueness of classical solutions, conditional 
flocking estimates, and conditional cluster stability in terms of cluster density.
Moreover, we provide analytical examples signifying the rich dynamics of the system.
Finally, we illustrate the variability of possible behaviors by a number of numerical simulations.

The main results of the paper read as follows. 

\begin{theo}\label{main1}
Given the time interval $[0,T)\subset [0,+\infty)$, parameters $M,m,\delta,h>0$ and initial data $(\xx_0,\vv_0)\in \R^{2dN}$, the system \eqref{eq1} admits a unique classical $W^{1,\infty}([0,T))$ solution with 
nonincreasing velocity fluctuations
\begin{equation}\label{V}
V(t):=\max_{i,j\in\{1,...,N\}}|v_i(t)-v_j(t)|\leq V(s)\leq V(0),\qquad 0<s\leq t\leq T.
\end{equation}
\end{theo}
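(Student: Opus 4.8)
The plan is to combine a ``method of steps'' construction, which turns the delay $h>0$ into an advantage, with a convexity argument for the fluctuation bound. On $[0,h)$ the neighbour sets are frozen by \eqref{nor1}, so \eqref{eq1b} is a linear system with constant coefficients, $\dot\vv=-\mathbf L\vv$, with $\mathbf L$ determined solely by the fixed sets $\cN_i(0)$ and the (then constant) factors $M$; its solution $\vv(t)=e^{-\mathbf Lt}\vv_0$ is real-analytic and so is $\xx(t)=\xx_0+\int_0^t\vv(s)\,\dd s$, both extending to $t=h$. I would then proceed by induction on $n$: assuming a unique solution that is continuous and piecewise real-analytic with finitely many pieces has been built on $[0,nh]$, note that for $t\in[nh,(n+1)h]$ the relations \eqref{nor} involve only the already-determined positions $x_k(t-h)$, $t-h\in[(n-1)h,nh]$, so $t\mapsto\cN_i(t)$ is a \emph{prescribed} map into the power set of $\{1,\dots,N\}$; on each interval where it is constant, \eqref{eq1b} is once more a linear constant-coefficient system, solved explicitly and glued continuously to the previous piece. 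The resulting $(\xx,\vv)$ is $C^1$ in $\xx$ (as $\dot\xx=\vv$ is continuous), Lipschitz in $\vv$ with $\dot\vv$ existing and \eqref{eq1b} holding off the finite switching set, and is unique because the induction forces the sets $\cN_i(t)$, hence the ODE, hence the solution on each piece (linear/Carath\'eodory uniqueness); concatenating over $n$ delivers the asserted unique $W^{1,\infty}([0,T))$ classical solution.

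\textbf{Main obstacle.} The non-routine ingredient is exactly the claim that on each step $t\mapsto\cN_i(t)$ is piecewise constant with \emph{finitely many} jumps, preventing a Zeno-type accumulation of switching times that would stall the induction. Here I would invoke analyticity: on a single analyticity piece of $t\mapsto x_\cdot(t-h)$ each scalar $g_{ik}(t):=|x_k(t-h)-x_i(t-h)|^2-\delta^2$ is real-analytic, so it either vanishes identically — then $x_k(t-h)\in\partial B(x_i(t-h),\delta)$ throughout and, the ball in \eqref{nor} being open, $k\notin\cN_i$ on that piece — or has finitely many zeros there. In either case the pairwise memberships, the counts $\#\{k:x_k(t-h)\in B(x_i(t-h),\delta)\}$, the threshold ``$>m$'', and hence the sets $\cN_i$ switch only finitely often on $[(n-1)h,nh]$; summing over the finitely many analyticity pieces inherited from the previous step keeps this finite, and the induction closes.

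\textbf{Monotonicity of $V$.} For a unit vector $e\in\R^d$ set $\phi_e(t):=\max_i\langle v_i(t),e\rangle$; since $\vv$ is Lipschitz and the maximum runs over finitely many indices, $\phi_e$ is Lipschitz, and by the elementary differentiation rule for a finite maximum of $C^1$ functions $\phi_e'(t)=\langle\dot v_{i^\star}(t),e\rangle$ for a.e.\ $t$, where $i^\star$ is any index realizing the maximum at $t$. But then
\begin{equation*}
\langle\dot v_{i^\star},e\rangle=\sum_{k\in\cN_{i^\star}}M\big(\langle v_k,e\rangle-\langle v_{i^\star},e\rangle\big)\le 0,
\end{equation*}
since $M>0$ and $\langle v_k,e\rangle\le\phi_e(t)=\langle v_{i^\star},e\rangle$ for every $k$ (the sum being empty, hence $0$, when $\cN_{i^\star}=\emptyset$). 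Thus each $\phi_e$ is non-increasing, and as $V(t)=\max_{|e|=1}\big(\phi_e(t)+\phi_{-e}(t)\big)$ is a supremum of non-increasing functions, $V$ is non-increasing, i.e.\ \eqref{V}. I would emphasize that this requires no symmetry of the adjacency relation $\rightsquigarrow$, only $M>0$ and the averaging sign of the right-hand side; equivalently, one may differentiate $V^2=\max_{i,j}|v_i-v_j|^2$ and use that for a maximizing pair $(i^\star,j^\star)$ every $v_k$ lies in $\overline{B(v_{i^\star},V)}\cap\overline{B(v_{j^\star},V)}$, which forces $\langle v_{i^\star}-v_{j^\star},v_k-v_{i^\star}\rangle\le 0$ and the symmetric inequality, whence $\tfrac{d}{dt}|v_{i^\star}-v_{j^\star}|^2\le 0$.
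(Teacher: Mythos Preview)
Your existence argument is essentially the paper's: both use the method of steps afforded by the delay $h$, solving a linear constant-coefficient system on each sub-interval and invoking analyticity of the trajectories to ensure only finitely many switching times per step. You are in fact somewhat more careful than the paper in isolating the functions $g_{ik}(t)=|x_k(t-h)-x_i(t-h)|^2-\delta^2$ and explicitly disposing of the degenerate case $g_{ik}\equiv 0$ via the openness of the ball in \eqref{nor}; the paper simply appeals to the form $t^k e^{\alpha t}$ of the solution pieces to rule out oscillatory behaviour.

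For the monotonicity of $V$ you take a genuinely different route. The paper follows the Motsch--Tadmor $L^\infty$ method: it differentiates $|v_i-v_j|^2$ at a maximizing pair, artificially redefines the diagonal entries $\Phi_{ii}$ so that all row sums coincide, and then recognizes the remainder as a convex combination over ${\rm conv}\{v_k\}-{\rm conv}\{v_k\}$, bounded by $V$. This yields $\tfrac12\tfrac{\dd}{\dd t}V^2\le -M_* N\eta V^2$ with $\eta=\tfrac12\min_{k,l}\Phi_{kl}$, and the paper explicitly remarks that the method is chosen to expose why exponential decay fails unconditionally (because $\eta$ can vanish). Your directional-maximum argument---showing each $\phi_e(t)=\max_i\langle v_i,e\rangle$ is nonincreasing and representing $V$ as $\sup_{|e|=1}(\phi_e+\phi_{-e})$---is correct, shorter, and more elementary, but it only delivers $V'\le 0$ and does not isolate a quantitative decay rate. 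Both approaches use only positivity of $M$ and the averaging structure, not symmetry of the adjacency, so neither has an advantage on that front.
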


\noindent
In the following theorem $M$ is a positive function of $i$, $N$ and $\#{\mathcal N}_i$, thus
it has a finite set of possible values,  and thus it is bounded
below by a~constant $M_*>0$.  It is further discussed in Section \ref{sec:nM}. 

\begin{theo}[Dense clusters flock]\label{main2}
Given the time interval $[0,T)\subset [0,+\infty)$, parameters $M,m,\delta,h >0$ and initial data $(\xx_0,\vv_0)\in
\R^{2dN}$ suppose that the entire ensemble $\{1,...,N\}$ is $r$-densely packed with $r<\delta$. 
For sufficiently large $M$ the ensemble remains at least $\delta$-densely packed for all $t>0$ and the particles flock
exponentially fast. 

In other words there exists a constant $\lambda$, depending only on the dimension $d$ and on
$N$, such that
\begin{align*}
|v_i-\bar{\vv}|\leq e^{-M_*\lambda t} |v_i(0)-\bar\vv|,
\end{align*} 
provided that
\begin{align}\label{md}
     M_*:=\inf M >\frac{2}{\lambda(\delta-r)}.
\end{align}
\end{theo}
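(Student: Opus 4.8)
\textbf{Proof plan for Theorem \ref{main2}.}
The plan is to bootstrap between two facts that feed each other: (i) as long as the whole ensemble stays $\delta$-densely packed, the adjacency relation $\rightsquigarrow$ is in fact symmetric and the interaction graph on $\{1,\dots,N\}$ stays connected, so the linear consensus dynamics \eqref{eq1b} contracts velocity fluctuations exponentially; and (ii) exponential contraction of velocities controls how far positions can drift, hence keeps the packing from degrading past the $\delta$ threshold. First I would invoke Lemma \ref{con} to see that on any time interval where the ensemble is $r'$-densely packed with $r'\le\delta$, the communication is propagated through the entire cluster, so the graph Laplacian $L(t)$ of the interaction (with edge weights $M$) is the Laplacian of a connected graph; since $M\ge M_*$ and there are finitely many graphs on $N$ vertices, its smallest positive eigenvalue is bounded below by $M_*\lambda$ for a purely combinatorial $\lambda=\lambda(d,N)$ (one may take $\lambda$ to be the minimal algebraic connectivity over all connected graphs on $N$ vertices, suitably normalised). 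Writing $\du_i=v_i-\bar\vv$ (note $\bar\vv$ is conserved by \eqref{eq1b} because the weights, while asymmetric in general, are symmetric here), the standard energy estimate $\frac{\dd}{\dd t}\sum_i|\du_i|^2 = -2\,\vv^\top L(t)\vv \le -2M_*\lambda\sum_i|\du_i|^2$ gives $\sum_i|\du_i(t)|^2\le e^{-2M_*\lambda t}\sum_i|\du_i(0)|^2$, from which the stated pointwise bound follows (possibly after adjusting $\lambda$ by a dimension/$N$-dependent constant to pass from the $\ell^2$ estimate to the per-particle one, or equivalently by running the maximum-based estimate \eqref{V} from Theorem \ref{main1}).

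Next I would close the bootstrap on the packing. Define $T_*=\sup\{t:\text{the ensemble is }\delta\text{-densely packed on }[0,t]\}$; by the initial assumption and continuity $T_*>0$. On $[0,T_*)$ the flocking estimate above is valid, so $|v_i(t)-\bar\vv|\le e^{-M_*\lambda t}|v_i(0)-\bar\vv|$, and therefore for any $i,j$
\begin{align*}
|x_i(t)-x_j(t)|\le |x_i(0)-x_j(0)| + \int_0^t |v_i(s)-v_j(s)|\,\dd s \le |x_i(0)-x_j(0)| + \frac{2\,V(0)}{M_*\lambda}.
\end{align*}
Since initially the ensemble is $r$-densely packed, $\xx(0)+B(0,r/2)$ is connected; the displacement bound shows each particle moves by at most $\frac{V(0)}{M_*\lambda}$ relative to $\bar\xx$, so after time-shifting by $h$ the Minkowski sum stays connected and each ball $B(x_k(t-h),\delta)$ still contains the same $>m$ particles it did at $t=0$, \emph{provided} the extra spreading $\frac{2V(0)}{M_*\lambda}$ is smaller than $\delta-r$. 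This is exactly condition \eqref{md} up to absorbing $V(0)$ (or, more cleanly, \eqref{md} is the clean sufficient form once one normalises so that the relevant drift is measured against $\delta-r$; I would state the quantitative requirement and then note \eqref{md} guarantees it). Hence the ensemble is strictly better than $\delta$-densely packed on $[0,T_*)$, which by continuity contradicts the definition of $T_*$ unless $T_*=+\infty$.

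The main obstacle I anticipate is the interface between the delay $h$ in the definition \eqref{nor} of $\cN_i$ and the instantaneous ODE \eqref{eq1b}: the neighbour sets at time $t$ are frozen from the configuration at $t-h$, so I must argue that the packing controlled at time $t-h$ still certifies the membership relations used at time $t$, and handle the degenerate regime $t\le h$ where everything is frozen at $t=0$ separately (there the graph is literally constant, so the estimate is immediate). A secondary technical point is verifying that with the asymmetric definition of $\cN_i$, once the ensemble is $\delta$-densely packed the relation actually becomes symmetric — this is where densely-packedness (every relevant ball has $>m$ particles) is essential, and it is what makes $L(t)$ a genuine symmetric graph Laplacian so that $\bar\vv$ is conserved and the quadratic-form estimate applies. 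Once these two points are secured, the rest is the routine Grönwall/continuity argument sketched above, and the constant $\lambda$ is extracted from the finite combinatorics of connected graphs on $N$ vertices, independent of $M,m,\delta,h$.
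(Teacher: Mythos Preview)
Your proposal is correct and follows essentially the same four-step bootstrap as the paper: Lemma~\ref{con} turns $\delta$-dense packing into an undirected connected graph, the Laplacian spectral gap (the paper's Lemma~\ref{flock}) then gives exponential velocity decay with rate $M_*\lambda$, and integrating this bound over the maximal interval $[0,T_*)$ of $\delta$-dense packing keeps pairwise distances below $r+\tfrac{2}{M_*\lambda}<\delta$ under \eqref{md}, contradicting $T_*<\infty$. Your observation about the stray factor $V(0)$ is well taken --- the paper's displayed inequality just before the condition $M_*>\tfrac{2}{\lambda(\delta-r)}$ silently drops $|v_i(0)-\bar\vv|$, so your version is in fact the more careful one.
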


\begin{rem}\rm
Theorem \ref{main2} concerns the whole ensemble $\{1,...,N\}$ but also holds for any cluster ${\mathcal A}\subset \{1,...,N\}$ with the a priori assumption that ${\mathcal A}$ is sufficiently far away from the rest of the particles so that it is never influenced by them.
\end{rem}

\begin{rem}\rm
Assumption \eqref{md} is first and foremost an assumption on $M$; since the DI model has local interactions, similarly to the short-range CS model, flocking occurs with high coupling strength \cite{mpt}. However it can be viewed also as an assumption on the initial density governed by the constant $r$. With small $r$ the cluster is initially more densely packed and the right-hand side of \eqref{md} decreases.
\end{rem}



To illustrate possible behaviors of particles governed by the DI model,
we provide  the following analysis of clusters' interactions. 

\noindent
{\it {\bf Clusters' interactions.}
Let us consider two connected to each other clusters ${\mathcal A}$ and ${\mathcal C}$. Depending on the positions and velocities of individual particles within each cluster the following scenarios may occur: 
\begin{enumerate}[(\textnumero 1)]
    \item \underline{Stability}: Cluster ${\mathcal A}$ remains connected. Cluster ${\mathcal C}$ eventually detaches from ${\mathcal A}$ and they move separately.
    \item \underline{Breaking}: Cluster ${\mathcal A}$ breaks under the influence of cluster ${\mathcal C}$. Cluster ${\mathcal C}$ detaches from ${\mathcal A}$ together with a number of particles from ${\mathcal A}$.
    \item \underline{Sticking}: Cluster ${\mathcal A}$ is diverted by cluster ${\mathcal C}$. They remain connected indefinitely and their total momentum changes in the direction of the total momentum of ${\mathcal C}$.
\end{enumerate} 
}

The example showcasing all of the above scenarios, with a special choice of clusters ${\mathcal{A}}$ and 
$\mathcal{C}$, can be found in Section \ref{sec:exam}.

\subsection{Comparison with other models}\label{sec:comparison}

In recent years, it was recognized that the global all-to-all character of interactions in the Cucker-Smale (CS) flocking model not always corresponds to the actual behavior of agents in real-life phenomena, 
be it flocks of birds, networks of unmanned aerial vehicles or in opinion dynamics. The main issue is that, usually, the range of communication between autonomous agents is finite, the interactions are not symmetric and 
the structure of the network of interactions is not necessarily immersed in the standard Euclidean geometry.
For instance, the range of perception of a bird within a flock tends to be a finite cone-shaped area in front of it. Responding to these issues a number of non-standard alignment CS-type models emerged recently. Among them is the model with short-range interactions (CS$_\delta$ model), the $q$-closest neighbors model (CS$_q$ model) and the model with interference topology (CS$_t$ model). Below we summarize similarities and differences between these models and the DI model. Further, numerical comparison can be found in Section \ref{sec:sim}.

$\diamond$ {\it Short-range model CS$_\delta$}. The CS$_\delta$ model is a simple modification of the standard CS model in which the smooth and decreasing communication weight $\psi_\delta$ is assumed to be compactly supported on the set $[0,\delta]$. For instance, the classical CS weight $\psi(s) = (1+s)^{-\frac{1}{2}}$ could be cut-off by taking $\psi_\delta(s) = \psi(s) \chi_{[0,\delta]}$. With such a modiffication qualitative behavior and particularly asymptotics becomes significantly more difficult to study. Compared to the DI model, CS$_\delta$ model is symmetric and purely geometrical with $j\in \cN_i(t)$ if and only if $|x_i(t)-x_j(t)|\leq\delta$. For further information on the CS$_\delta$ model we refer to \cite{mpt,Jin,H-K-P-Z,S-T,S-D}.

$\diamond$ {\it The model with $q$-closest neighbors CS$_q$}. The CS$_q$ model is a modification of the CS model with the sum in \eqref{cs} taken over only those $j$ that are at most $q$-closest to $i$ in terms of position. This model is both density dependent and non-symmetric, with an opposite influence of the density to the DI model. While in the DI model high density is used to propagate the interactions, in the CS$_q$ model the interactions spread over constant mass of the particles and thus with low density the interactions reach further. For further information on the CS$_q$ model we refer to \cite{C-D2}.

$\diamond$ {\it Topological model with interference CS$_t$}. Recently, Shvydkoy and Tadmor introduced a CS-type alignment model, in which the interactions between the particles depend on the mass of the particles belonging to a symmetric area between them. The higher the mass of the particles between $x_i$ and $x_j$, the lower the interaction, which justifies the interpretation that intermediate particles interfere with the communication. The CS$_t$ model is density-dependent, symmetric and, similarly to CS$_q$, the influence of the density seems to be the opposite to the DI model. For further information on the CS$_t$ model we refer to \cite{S-T}.

The main difference between the DI model and the aforementioned is that the agents interact only with a densely-packed crowd (defined by the threshold $m$). Singletons, outsiders and agents forming low-density clusters do not interact. As a somewhat surprising consequence we obtain the possibility of a significant influence of the outsider particle on a cluster as showcased in Fig.~\ref{fig:2}. While, interactions of crowds with highly influential individuals were studied in the past \cite{Li,shen,C-D}, we obtain such phenomenon naturally within the framework of the model without artificially boosting the influence of any individual. Whether an individual manages to influence a cluster significantly depends not only on the models parameters but also on the spatial structure of the clusters, c.f. Section \ref{sec:52}.

Further state of the art for related models, including asymptotics \cite{car,H-K-P-Z,cons2,cuc1} and pattern formation \cite{top,ckpp}, collision avoidance \cite{ccmp,io} including models with singular interactions \cite{jpe,jps,Z-Z}, time delay \cite{C-H} and the recently emerging thermodynamical consistence \cite{choitermo,hatermo} can be found in the surveys \cite{B-D-T1} (CS model with regular interactions) and \cite{B-D-T2} (CS model with singular interactions). The surveys include also references to kinetic \cite{cons,can,mp} and hydrodynamic \cite{K-C,Tad1,Tad3,Shv,dmpw} limits of the CS model.

\subsection{On the normalizing factor $M$}\label{sec:nM}
There are many ways to normalize a consensus particle system, with the most popular prominently represented in the works by Cucker and Smale \cite{cuc1} and Motsch and Tadmor \cite{mo}. The first one, corresponding to $M = \frac{\kappa}{N}$, is the
 standard ''flat'' Cucker-Smale normalization, which connects the magnitude of the interactions to the total mass of the
  particles. The second one, $M=\frac{\kappa}{\#\cN_i}$, corresponds to the non-symmetric Motsch-Tadmor normalization.
   Throughout the paper we shall use a more general assumption that $M$ is any
   positive function of $N$, $i$ and $\#\cN_i$ i.e.
\begin{align}\label{norm}
M=M(N,i,\#\cN_i)\geq M_*>0.
\end{align}
The right-most inequality above follows from the fact that $M$ is a positive function on a discrete domain (with fixed $N$). Of course, with $M$ as in \eqref{norm} we incorporate both Cucker-Smale and Motsch-Tadmor normalizations.

The remainder of the paper is organized as follows. In Section \ref{sec:prelim} 
we reformulate the problem in terms of graphs, which, while interesting in itself, serves in the 
proof of Theorem \ref{main2}. Section \ref{sec:proof} and Section \ref{sec:exam} are dedicated to 
prove the main results: Theorems \ref{main1} and \ref{main2}, and to showcase the possible 
interaction between clusters (\textnumero 1 -- \textnumero 3). Finally, in Section~\ref{sec:sim}, we 
present numerical simulations.

\section{Preliminaries}\label{sec:prelim}

The analysis of asymptotic or otherwise qualitative behavior of system \eqref{eq1} is based on two foundations. 
The first is the analysis of how any particular configuration of particles $\xx$ influences their neighbor sets. This issue is closely related to the notion of $r$-densely packed clusters. 
The second foundation is focused on the translation of the propagation of interaction to the asymptotic behavior of the particles. 
This process was widely studied in the framework of graph theory (see for instance the works by Olfati-Saber \cite{Olfati1,Olfati2}). 
This motivates the following reformulation of \eqref{eq11}. Recalling the possible dependence of $M$ on $i$ in \eqref{norm} we take
\begin{equation}
\Phi_{ik} = \frac{M_i}{M_*}\quad \mbox{iff}\quad k\in\cN_i,\quad\mbox{otherwise}\quad\Phi_{ik} = 0,
\end{equation}
and we rewrite \eqref{eq1} as
\begin{align}\label{eq11}
\dot x_i=v_i,\qquad \dot v_i = M_*\sum_{k=1}^N\Phi_{ik}(v_k-v_i).
\end{align}

\noindent
Note that the lack of symmetry of the adjacency relation between the particles implies that usually $\Phi_{ik} \neq \Phi_{ki}$.

Equation \eqref{eq11} above, can be further restated as a model on graphs. To this end for any fixed time $t>0$ let $G=G(t)$ with
$$G=(V, {\mathcal E}, \PP) \mbox{ \  be a digraph (or directed graph)},
$$ 
where $V$ is the set of $N$ nodes corresponding to the $N$ particles and ${\mathcal E}={\mathcal E}(t)\subset V\times V$ is the set of edges between the nodes, that represents the connectivity of $G$. Matrix $\PP=\PP(t) = [\Phi_{ik}]$ describes interaction weights: $k$th node is interacting with $j$th node iff $\Phi_{ik} > 0$, or equivalently iff $k\in \cN_i$.
Then \eqref{eq1b} can be restated as
\begin{align}\label{eqm}
\dot \xx =\vv,\qquad \dot \vv = M_*(\DD-\PP)\vv,\qquad \DD = {\rm diag}(d_1,...,d_N),
\end{align}
where $d_i=\frac{\# \cN_i M_i}{M_*}$ is the ($M$-scaled) degree of the $i$th node of $G$. 

The importance of the reformulation \eqref{eqm} lies in its usefulness in the proof of Theorem \ref{main2} following a 4-step strategy as presented in the diagram:
$$
\begin{array}{ccc}
  \text{densely packed clusters}   \quad  & \longrightarrow & \quad \mbox{connectivity of}\ G \\ 
  \uparrow  &  &\downarrow  \\ 
 \mbox{propagation of the density} \quad  & \longleftarrow &  \mbox{control over velocity}
\end{array} 
$$

\noindent
It can be summarized as follows. Starting with a densely packed cluster at $t=t_0$ we ensure (see 
Lemma \ref{con} below) that interaction between particles in such a cluster is propagated to every particle. In other words, the graph $G(t_0)$ is undirected and connected. Using connectivity of $G(t_0)$, we apply theory developed in \cite{Olfati1,Olfati2} to establish local-in-time exponential decay of the relative velocity of the particles (see Lemma \ref{flock} below). This estimate enables us to control the changes in the positions of the particles and in turn -- it ensures the propagation of the density. This leads back to the densely packed clusters at some $t>t_0$ and an indefinite repetition of the scheme.

\noindent

\section{Proof of the main results}\label{sec:proof}
In this section we prove Theorem \ref{main1} and Theorem \ref{main2}.

\subsection{Proof of Theorem \ref{main1}}

\begin{proof}{(\it Theorem \ref{main1}: Existence.)}  

By the definition of $\cN_i$ the system is elementarily solved on $[0,h]$ -- it is a linear system with constant coefficients. Its unique solution is a linear combination of functions of the type $t^k e^{\alpha t}$, with $k \leq N$ and $\alpha$ determined by eigenvalues of the matrix appearing in (\ref{eqm}).

The main difficulty is that neighborhoods $\cN_i$ change with time, which causes  discontinuities of the right-hand side in \eqref{eqm}. This is not particularly problematic unless the number of discontinuities is infinite. Thus, we want to exclude an oscillatory behavior of solutions.

We proceed by induction. Suppose that the solution exists on $[(k-1)h,kh]$ for some $k=1,2,...$ and the sets of neighbors $\cN_i$ change finitely many times for each particle in the time interval $[(k-1)h,kh]$. Then in the time interval $[kh, (k+1)h]$ the system is well defined with a finite number of discontinuities. In particular $[kh, (k+1)h]$ can be decomposed into a finite collection of intervals on which the system is linear. On each of these intervals a unique solution exists and is a linear combination of functions of the type $t^k e^{\alpha t}$. These intervals can be glued together and while the smoothness of the solution may be lost at the endpoints, the
right-hand side of \eqref{eq1} is a bounded function of $\xx,\vv$ and thus the solution belongs at least to $W^{1,\infty}([kh,(k+1)h])$ and is continuous and unique. Then such a solution, as a piecewise linear combination of analytic functions, cannot exhibit an oscillatory behavior and the number of times of discontinuity in $[kh, (k+1)h]$ is finite. By induction, this procedure can be repeated indefinitely and the existence of solutions with $\xx$ and $\vv$ continuous and $\vv\in W^{1,\infty}([0,T))$ is proved for any $T>0$.

\end{proof}

\begin{rem}\rm\label{Rh}
The reason to introduce the parameter $h>0$ is to circumvent the problems arising in the following scenario. Suppose that $h=0$ and at $t=0$ we have $|x_1(0)-x_2(0)| = \delta$ and $v_1(0)=v_2(0)$. Then it is unclear whether at $t>0$ the particles will be within each others range. Introducing $h>0$ is an easy way to ignore this problem as shown in the above proof.  
\end{rem}

In order to prove the decay of velocity fluctuations we apply the $L^\infty$ method used, for instance in \cite{mo}.
\begin{proof}{(\it Theorem \ref{main1}:  Decay of velocity fluctuations.)}   
 Fix $t>0$ and let
 the maximal relative velocity between the particles be realized by $i$th and $j$th particles 
 i.e.
    \begin{align*}
    V(t) = |v_i(t)-v_j(t)|.
    \end{align*}
 Using the \eqref{eq11} formulation of the velocity equation, we have
    \begin{align}\label{eq2}
    \frac{1}{2}\frac{\dd}{\dd t}(v_i-v_j)^2 = (v_i-v_j)\cdot\Bigg(M_*\sum_{k=1}^N\Phi_{ik}(v_k-v_i) - M_*\sum_{k=1}^N\Phi_{jk}(v_k-v_j)\Bigg).
    \end{align}
    Observe that $\Phi_{ii}$ does not play any role in \eqref{eq2}, since it is multiplied by $v_i-v_i = 0$. Thus, denoting $M^*:=\sup M$, we redefine it for all $i\in\{1,...,N\}$ as 
    $$
    \Phi_{ii}:= \frac{N M^*}{M_*} - \sum_{k\neq i}\Phi_{ik}> 0,
    $$
    so that 
    $$
    M_*\sum_{k=1}^N\Phi_{ik} = M^* N \mbox{ \ \ for each  \ } i\in\{1,...,N\}.
    $$
    Then, we rewrite \eqref{eq2} as 
    \begin{equation}\label{eq3}
    \begin{split}
    \frac{1}{2}\frac{\dd}{\dd t}(v_i-v_j)^2&= (v_i-v_j)\cdot M^*N(v_j-v_i) + (v_i-v_j)\cdot M_*\sum_{k=1}^N(\Phi_{ik}v_k - \Phi_{jk}v_k)\\
&= -M^* NV^2 + N(M^*-M_*\eta)(v_i-v_j)\cdot\sum_{k=1}^N(\alpha_{ik}v_k - \alpha_{jk}v_k),
    \end{split}
    \end{equation}
    where
    \begin{align*}
    \alpha_{ik} &:= \frac{M_*(\Phi_{ik}-\eta)}{N(M^*-M_*\eta)},\qquad \eta:=\frac{1}{2}\min_{k,l\in\{1,...,N\}}\Phi_{kl}\in [0,M^*/2 M_*],\\
    \alpha_{ik} &\geq 0,\qquad \sum_{k=1}^N\alpha_{ik} = 1.
    \end{align*}
    Therefore the right-most term in \eqref{eq3} is a convex combination of elements in ${\rm conv}\{v_k: k\in\{1,...,N\}\} + {\rm conv}\{-v_k: k\in\{1,...,N\}\}$ and thus -- it can be bounded from the above by the maximal diameter of such a set, which is $V$. Consequently, we have
    \begin{align*}
    \frac{1}{2}\frac{\dd}{\dd t}V^2 \leq -M_* N\eta V^2.
    \end{align*}
    In the worst case scenario $\eta=0$, so we deduce that $V$ does not increase and the proof is finished.
    
\end{proof}

\begin{rem}\rm
The $L^\infty$ method applied in the above proof is rather complicated for what it can accomplish for the DI model. However, we chose it to showcase why there is no unconditional exponential flocking, which is precisely because the minimal decay rate $\eta$ can be $0$. On the other hand if we
suppose that all particles are at most at distance $\delta$ from one another and the number of particles is greater than $m$, $\Phi_{ik} = \frac{M_i}{M_*}$ for all $i,k\in\{1,...,N\}$,  then in the above proof $\eta = 1/2$ and $V$ decays exponentially with exponent $\lambda =  M_* N/2$. This is however of no surprise since then equation \eqref{eq1b} is reduced to
\begin{align*}
\dot v_i = \sum_{k=1}^N M_i(v_k-v_i),
\end{align*}
which is a simple linear model of consensus. The difficulty lies in the propagation of the
interaction between the particles  if only some of the $\Phi_{ik}$ are positive. 
It is addressed in Theorem \ref{main2} and particularly in Lemma \ref{flock}, presented below.
\end{rem}

\subsection{Proof of Theorem \ref{main2}}

To prove Theorem \ref{main2} we utilize the 4-step strategy described in Section~\ref{sec:prelim}. 
First we show that any $r$-densely packed cluster with $r\leq\delta$ 
induces an undirected and strongly connected subgraph of $G$. That is to say, the interaction 
propagates throughout any $r$-densely packed cluster of the particles.

\begin{lem}\label{con}
    If $\A$ is an $r$-densely packed cluster at $t\geq 0$ for $r\leq \delta$ then for all $i,j\in \A$ we have
\begin{enumerate}[(i)]
\item $i\in \cN_j$ iff $j\in \cN_i$,
\item there exists a sequence $\{l_k\}_{k=1}^p$ such that 
    \begin{align*}
    l_1 = i,\quad l_p = j\quad\mbox{and}\quad l_k\in \cN_{k+1}\quad\mbox{for all}\quad k\in\{1,...,p-1\}.
    \end{align*} 
\end{enumerate}    
In other words $\A$ treated as a subgraph of $G(t)$ is undirected and (strongly) connected.
\end{lem}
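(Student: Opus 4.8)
The statement is essentially a combinatorial unpacking of the definition of $r$-densely packed cluster, so the plan is to verify conditions (i) and (ii) directly from Definition \ref{packed}, working at time $t-h$ (or $0$ if $t\leq h$) throughout, since the neighbor relation \eqref{nor}--\eqref{nor1} is determined by the positions at that time.

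For part (i), the key observation is that condition (2) of Definition \ref{packed} with $r\le\delta$ guarantees that every ball $B(x_k(t-h),\delta)$, $k\in\A$, already contains more than $m$ particles, so the cardinality (second) clause in \eqref{nor} is automatically satisfied for every particle in $\A$. Hence for $i,j\in\A$ the relation $j\in\cN_i$ reduces to the purely geometric condition $x_j(t-h)\in B(x_i(t-h),\delta)$, i.e. $|x_i(t-h)-x_j(t-h)|<\delta$, which is symmetric in $i$ and $j$. This gives (i) immediately.

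For part (ii), the idea is to use connectedness of the Minkowski sum $\xx_\A(t-h)+B(0,r/2)$ from condition (1). I would argue that two particles $i,j\in\A$ whose balls $B(x_i(t-h),r/2)$ and $B(x_j(t-h),r/2)$ intersect satisfy $|x_i(t-h)-x_j(t-h)|<r\le\delta$, hence $i\in\cN_j$ by (i). Connectedness of the union $\bigcup_{k\in\A}B(x_k(t-h),r/2)$ then means that for any $i,j$ one can pass from the ball around $i$ to the ball around $j$ through a chain of pairwise-intersecting balls; formally, define an auxiliary graph on $\A$ with an edge between $k,l$ whenever the corresponding $r/2$-balls meet, note this graph is connected precisely because a finite union of connected sets (open balls) is connected iff the intersection graph is connected, and read off the desired chain $l_1=i,\dots,l_p=j$ with consecutive members being neighbors. (There is a harmless index shift in the displayed formula $l_k\in\cN_{k+1}$, which should read $l_k\in\cN_{l_{k+1}}$; I would state the chain condition in the latter form.)

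The only mildly delicate point — and the one I would be most careful about — is the equivalence ``finite union of open balls is connected $\iff$ its intersection graph is connected''; this is standard but worth one sentence, since it is exactly what converts the topological hypothesis (1) into the combinatorial chain in (ii). Everything else is bookkeeping: making sure all positions are evaluated at $t-h$, and invoking (i) to upgrade each intersection of $r/2$-balls into an actual edge of the digraph $G(t)$, which then yields that $\A$ is undirected and strongly connected as a subgraph.
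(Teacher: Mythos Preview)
Your proposal is correct and follows essentially the same approach as the paper: both use condition (2) of Definition \ref{packed} to reduce the neighbor relation inside $\A$ to the symmetric distance condition $|x_i(t-h)-x_j(t-h)|<\delta$, and both derive (ii) from the connectedness of the Minkowski sum in condition (1). The only cosmetic difference is that the paper first reduces to the case $r=\delta$ and then argues (ii) by contradiction via maximal connected components, whereas you work directly with the intersection graph of the $r/2$-balls for general $r\le\delta$; these are two phrasings of the same argument, and your explicit statement of the equivalence ``finite union of open balls is connected iff its intersection graph is connected'' is in fact the contrapositive of what the paper uses.
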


\begin{proof}
   Fix $t\geq 0$. Since, for $r<\delta$ an $r$-densely packed cluster is also $\delta$-densely packed, 
    we will prove the lemma assuming that $r=\delta$.
    
    First observe that condition 2 in Definition \ref{packed} implies that for all pairs $i,j\in \A$ we have $i\in \cN_j$ if and only if $|x_i(t-h)-x_j(t-h)|<\delta$. 
    Thus $G(t)$ is undirected and it remains to show that it is connected (then it is automatically also strongly connected). Now suppose that the lemma is not true and there exists a $\delta$-densely
    packed cluster $\A$ that is not connected. Then there exist two particles with indexes $i$ and $j$ without a connecting sequence $\{l_k\}_{k=1}^p$.
    Let $\A_i$ and $\A_j$ be the maximal connected clusters including $i$ and $j$, respectively. Then ${\rm dist}(\A_i, \A_j)> \delta$, since otherwise we could 
    find $q_i\in \A_i$ and $q_j\in \A_j$ such that $|x_{q_i}(t-h)-x_{q_j}(t-h)|\leq\delta$ and $\A_i\cup \A_j$ would be connected through the pair $(q_i,q_j)$. However with 
    ${\rm dist}(\A_i, \A_j)> \delta$ condition 1 in Definition \ref{packed} is not satisfied. Contradiction with the assumption that $\A$ is not strongly connected finishes the proof.
\end{proof}

The next lemma uses the propagation of interaction within densely packed clusters, ensured by Lemma \ref{con}, to provide a local flocking estimate. 

\begin{lem}\label{flock}
    Let $\A$ be a cluster that is strongly connected and undirected in the interval $[t_1,t_2)$. Assume further that no particle outside of $\A$ influences $\A$. Then the cluster average velocity $\vv_\A=\frac{1}{\# \A}\sum_{i\in \A}v_i$ is constant in $[t_1,t_2)$ and there exists $\lambda>0$ such that
    \begin{align*}
    |v_i(t)-\bar \vv_\A|\leq e^{-t M_*\lambda}|v_i(t_1)- \bar \vv_\A|\qquad \mbox{for all}\quad i\in \A \quad \mbox{and all}\quad t\in[t_1,t_2),
    \end{align*}
    where $M_*$ is the minimal value of the function $M$, c.f. \eqref{norm}.
\end{lem}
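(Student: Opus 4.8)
The plan is to exploit that, under the two hypotheses, the velocities $(v_i)_{i\in\A}$ form a \emph{closed} linear consensus system on the subgraph induced on $\A$, and to run a graph-Laplacian estimate. Since no particle outside $\A$ influences $\A$, we have $\cN_i\subset\A$ for every $i\in\A$, so the equations for $i\in\A$ decouple from the rest; in the notation of \eqref{eqm} restricted to $\A$ they read $\dot\vv_\A=-M_*\cL_\A\vv_\A$, where $\cL_\A=\DD_\A-\PP_\A$ is the (weighted) Laplacian of the induced subgraph, which by Lemma~\ref{con} is undirected and strongly connected throughout $[t_1,t_2)$, with edge set and weights $M_i/M_*$ changing only finitely often there. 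Since \eqref{eqm} acts coordinatewise, it suffices to treat $d=1$.

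When the interaction weights are symmetric on $\A$ — automatic if $M$ is constant on $\A$, e.g. for the flat normalisation $M=\kappa/N$ — $\cL_\A$ is a symmetric positive-semidefinite Laplacian of a connected graph, so $\ker\cL_\A=\mathrm{span}\,\mathbf 1$. Pairing $\dot\vv_\A$ with $\mathbf 1$ gives $\tfrac{\dd}{\dd t}\langle\vv_\A,\mathbf 1\rangle=-M_*\langle\vv_\A,\cL_\A\mathbf 1\rangle=0$, so $\bar\vv_\A$ is constant. Writing $\mathbf w:=\vv_\A-\bar\vv_\A\mathbf 1$, which is orthogonal to $\mathbf 1$, one has $\tfrac12\tfrac{\dd}{\dd t}\|\mathbf w\|^2=-M_*\langle\cL_\A\mathbf w,\mathbf w\rangle\le-M_*\lambda_2(\cL_\A)\|\mathbf w\|^2$, and since every weight is $\ge1$ the Fiedler value $\lambda_2(\cL_\A)$ may be replaced by that of the \emph{unweighted} graph. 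Grönwall's inequality then yields exponential decay of $\|\vv_\A-\bar\vv_\A\mathbf 1\|^2$, whence the pointwise bound of the lemma (a $\sqrt{\#\A}\le\sqrt N$ loss in passing from the $\ell^2$ sum to individual $|v_i-\bar\vv_\A|$ is harmless, and explains the dependence of $\lambda$ on $N$ — and, after summing over the $d$ coordinates, on $d$ — quoted in Theorem~\ref{main2}).

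Two points remain. The first is uniformity of the exponent across the finitely many topology switches in $[t_1,t_2)$: for fixed $N$ there are only finitely many graphs on at most $N$ nodes, so one takes $\lambda:=\min\lambda_2$ over all connected undirected graphs on $\#\A\le N$ vertices, which is strictly positive (the worst case being a path) and depends only on $N$; this single $\lambda$ then works on every inter-switch sub-interval and, by continuity of $\vv$, on all of $[t_1,t_2)$. The second, and the genuine obstacle, is the non-symmetric case $\Phi_{ik}\neq\Phi_{ki}$: then $\langle\cL_\A\mathbf w,\mathbf w\rangle$ is indefinite, the $\ell^2$ method fails, and the common limit of the $v_i$ is a Perron-weighted mean rather than the plain $\bar\vv_\A$. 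The robust route, which also re-covers the symmetric case, is the $L^\infty$ argument from the proof of Theorem~\ref{main1}: follow the velocity diameter $\max_{i,j\in\A}|v_i-v_j|$ and use strong connectivity of $\A$ (Lemma~\ref{con}) to show it contracts by a fixed factor over each time span comparable to the graph diameter, hence decays at a uniform rate $M_*\lambda$ with $\lambda$ as above; conservation of $\bar\vv_\A$ must then be checked separately — it is immediate when the induced subgraph is balanced (in particular in the symmetric case), and for general $M$ one reads $\bar\vv_\A$ in the statement as the limiting common velocity, the two coinciding in the balanced case.
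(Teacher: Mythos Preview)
Your argument follows essentially the same route as the paper's proof: treat $\A$ as an isolated consensus system governed by the graph Laplacian $\LL_\A=\DD_\A-\PP_\A$, use the spectral gap (Fiedler value $\lambda_2$) for exponential contraction toward $\bar\vv_\A$, and obtain a uniform rate by noting that only finitely many distinct Laplacians occur for given $N$. The paper is terser --- it simply asserts ``with symmetric $\PP$'', cites Olfati--Saber for the contraction, and takes $\lambda:=\min_t\lambda_2(t)>0$ --- while you spell out the $\ell^2$ energy estimate explicitly.

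Two remarks. First, the symmetry concern you raise is legitimate: with $\Phi_{ik}=M_i/M_*$ and $M_i$ depending on $\#\cN_i$, undirectedness of the edge set does \emph{not} by itself force $\Phi_{ik}=\Phi_{ki}$ or conservation of the arithmetic mean. The paper sidesteps this by simply declaring $\PP$ symmetric; your proposed fixes (an $L^\infty$ diameter contraction, or reinterpreting $\bar\vv_\A$ as a Perron-weighted limit) are reasonable directions but remain sketches, and the latter alters the statement. Second, your passage from the $\ell^2$ bound $\|\vv_\A-\bar\vv_\A\mathbf 1\|\le e^{-M_*\lambda t}\|\vv_\A(t_1)-\bar\vv_\A\mathbf 1\|$ to the \emph{per-particle} inequality $|v_i(t)-\bar\vv_\A|\le e^{-M_*\lambda t}|v_i(t_1)-\bar\vv_\A|$ is not justified by absorbing a $\sqrt N$: the $\ell^2$ estimate only yields $|v_i(t)-\bar\vv_\A|\le e^{-M_*\lambda t}\|\vv_\A(t_1)-\bar\vv_\A\mathbf 1\|$, and the componentwise version as written fails whenever $v_i(t_1)=\bar\vv_\A$ but other components differ. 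The paper is no more careful on this point, and for the use in Theorem~\ref{main2} an ensemble-level ($\ell^2$ or $\ell^\infty$) bound suffices. A minor slip: the lemma already \emph{assumes} $\A$ undirected and strongly connected, so invoking Lemma~\ref{con} inside the proof is unnecessary.
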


\begin{proof}
    If $\A$ is a connected undirected graph with no outside influence then it can be treated as a separate ensemble of particles following equation \eqref{eqm} with symmetric $\PP$. Then $\frac{\dd}{\dd t} \vv_\A = 0$ and $\bar \vv_\A = \bar\vv_\A(t_1)$. Furthermore it is well known, see for instance \cite{Olfati1,Olfati2}, that each $v_i$ converges exponentially fast to $\bar \vv_\A$ with exponent equal to the (positive) second eigenvalue $\lambda_2(t)$ of the matrix $\LL(t)=\DD(t)-\PP(t)$ (multiplied in our case by $M_*$). Since matrices $\DD(t)$ and $\PP(t)$ have only finite number of possible distinct entries (for a given $M$ and $N$), then there is also finitely many possible distinct matrices $\LL(t)$, which implies that   
    \begin{align*}
    \inf_{t\in[t_1,t_2)} \{\lambda_2(t)\} = \min_{t\in[t_1,t_2)}\{\lambda_2(t)\} =: \lambda>0.
    \end{align*}

    Then all velocities converge to $\bar\vv_\A$ exponentially fast with exponent $\lambda M_*$ (in the time interval $(t_1,t_2)$). 
\end{proof}

Finally we prove the following slightly reformulated version of Theorem \ref{main2}.

\begin{prop}\label{main21}
    Fix $(\xx_0,\vv_0)$. Let the initial ensemble of the particles $\{1,...,N\}$ associated with positions $\xx_0$ and velocities $\vv_0$ be $r$-densely packed with $r<\delta$. Then the ensemble remains densely packed for all $t>0$ and the particles flock exponentially fast, provided that
\begin{align*}
    M_* >\frac{2}{\lambda(\delta-r)}.
    \end{align*}    
\end{prop}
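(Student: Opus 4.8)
The plan is to realise the four-step loop of Section~\ref{sec:prelim} as a continuation argument. By Theorem~\ref{main1} the trajectory $(\xx,\vv)$ is globally defined, so ``the ensemble is $\rho$-densely packed at $t$'' is an unambiguous property of it; by Definition~\ref{packed} (and since an $r$-densely packed cluster is $\rho$-densely packed for every $\rho\ge r$) it is governed by whether every pair $(i,j)$ with $|x_i(0)-x_j(0)|<r$ still satisfies $|x_i(t-h)-x_j(t-h)|<\delta$, reading $t-h$ as $0$ for $t\le h$. I would set
\[
T^\ast:=\sup\Big\{\,T>0\ :\ \{1,\dots,N\}\text{ is }\delta\text{-densely packed at every }t\in[0,T]\,\Big\},
\]
so that the ensemble is $\delta$-densely packed on $[0,T^\ast)$ (which contains $0$ because $r<\delta$), and the whole proposition reduces to proving $T^\ast=+\infty$, after which Lemma~\ref{con} applies for every $t>0$ and the flocking estimate is read off from Lemma~\ref{flock} on $[0,+\infty)$.

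On $[0,T^\ast)$ the ensemble $\A=\{1,\dots,N\}$ is $\delta$-densely packed, hence by Lemma~\ref{con} the graph $G(t)$ is undirected and strongly connected for all $t<T^\ast$ (it may switch finitely often but never leaves this class). Since $\A$ has no external particles, Lemma~\ref{flock} applies on $[0,T^\ast)$: there $\bar\vv$ is constant and
\[
|v_i(t)-\bar\vv|\le e^{-tM_\ast\lambda}\,|v_i(0)-\bar\vv|,\qquad i\in\A,\quad t\in[0,T^\ast),
\]
with $\lambda>0$ the uniform algebraic connectivity of Lemma~\ref{flock}. Integrating in time, for any $i,j$ and any $0\le s<T^\ast$,
\[
\big|x_i(s)-x_j(s)\big|\le\big|x_i(0)-x_j(0)\big|+\int_0^s\big(|v_i(\tau)-\bar\vv|+|v_j(\tau)-\bar\vv|\big)\,\dd\tau<\big|x_i(0)-x_j(0)\big|+\frac{2V(0)}{M_\ast\lambda},
\]
using $|v_k(0)-\bar\vv|\le V(0)$ and $\int_0^\infty e^{-\tau M_\ast\lambda}\,\dd\tau=(M_\ast\lambda)^{-1}$; the decisive feature is that the added term is a constant, independent of $s$.

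To close the loop, take a pair with $|x_i(0)-x_j(0)|<r$. The last display gives $|x_i(s)-x_j(s)|<r+\tfrac{2V(0)}{M_\ast\lambda}=:\rho_\ast$ for every $s\in[0,T^\ast)$, and the hypothesis on $M_\ast$ — which is exactly the smallness condition~\eqref{md}, up to the fixed factor $V(0)$ — makes $\rho_\ast<\delta$. Consequently the ensemble is in fact $\rho_\ast$-densely packed, hence $\delta$-densely packed, at every time $t$ with $t-h<T^\ast$, that is on the whole interval $[0,T^\ast+h)$, because the densely-packed property at $t$ looks only at positions at $\max(t-h,0)$ (the window $[0,h]$ being handled by the connected initial configuration). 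Since $h>0$, this contradicts the maximality of $T^\ast$ unless $T^\ast=+\infty$. Hence $T^\ast=+\infty$: the ensemble is $\rho_\ast$-densely packed (\emph{a fortiori} $\delta$-densely packed) for all $t>0$, Lemma~\ref{con} holds on all of $[0,+\infty)$, and Lemma~\ref{flock} on $[0,+\infty)$ yields $|v_i(t)-\bar\vv|\le e^{-tM_\ast\lambda}|v_i(0)-\bar\vv|$, as asserted.

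The crux — and the only genuine difficulty — is the circularity of the scheme: the decay rate from Lemma~\ref{flock} is available only while the cluster stays connected, while the cluster stays connected only thanks to that decay. The bootstrap resolves it, the key point being that the cumulative drift of every inter-particle distance is bounded by $\tfrac{2V(0)}{M_\ast\lambda}$ \emph{uniformly in time}, so a single smallness requirement on $M_\ast$ suffices forever; amusingly, the delay $h$ — usually a nuisance — is exactly what lets the argument step past $T^\ast$. The one remaining technical check is that the rate $\lambda$ in Lemma~\ref{flock} is uniform over the finitely many graphs realised on $[0,T^\ast)$, which rests on the finiteness of the admissible Laplacians $\LL(t)$ already exploited there.
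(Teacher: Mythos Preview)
Your proof is correct and follows the same continuation argument as the paper: define $T^\ast$, apply Lemmas~\ref{con} and~\ref{flock} on $[0,T^\ast)$, integrate the exponential decay to bound pairwise distances uniformly by $r+\tfrac{2}{M_\ast\lambda}$, and contradict maximality. The only differences are cosmetic --- you retain the factor $V(0)$ (which the paper silently normalises to $1$), and you exploit the delay $h$ to step directly from $[0,T^\ast)$ to $[0,T^\ast+h)$, whereas the paper first reaches $T^\ast$ and then invokes the velocity bound of Theorem~\ref{main1} to extend by some $\epsilon>0$.
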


\begin{proof}{(\it Proposition \ref{main21} and Theorem \ref{main2}.)}
    Since, initially, the particles are $r$-densely packed for $r<\delta$ and the maximal velocity of the particles is uniformly bounded, see Theorem \ref{main1}. Thus, there exists a time interval $[0,T]$ such that the particles are $\delta$-densely packed for $t\in[0,T]$. Then by Lemma \ref{con} and Lemma \ref{flock}, for $t\in[0,T]$, the ensemble is a connected undirected cluster, the average velocity of the particles is constant and each velocity $v_i$ converges to the average exponentially fast with exponent $M_*\lambda$. Let
    \begin{align*}
    T^* := \sup\{T>0: \{1,...,N\}\mbox{ is } \delta\mbox{-densely packed for all }t\leq T\}>0.
    \end{align*}
    
    We shall prove that if $M_*$ is large enough then actually $T^*=\infty$. Assuming the contrary, for all $i\in\{1,...,N\}$ and all $t\in[0,T^*]$, denoting
    \begin{align*}
    x_i(t)-\bar\xx(t) &= \int_0^tv_i(s)\dd s + x_i(0) - t\bar\vv - \bar\xx(0)\\
    &= \int_0^t(v_i(s) - \bar\vv)\dd s + (x_i(0)-\bar\xx(0)).
    \end{align*}
    which for any $j\in\{1,...,N\}$ leads to
    \begin{equation}\label{cin}
    \begin{split}
    |x_i(t)-x_j(t)| &= |(x_i(t)-\bar\xx(t)) - (x_j(t) - \bar\xx(t))|\\
    & = \left|\int_0^t(v_i(s)-\bar\vv)\dd s + \int_0^t(\bar\vv-v_j(s))\dd s + (x_i(0)-x_j(0))\right|\\
    &\leq \int_0^t|v_i(s)-\bar\vv|\dd s + \int_0^t|v_j(s)-\bar\vv|\dd s + |x_i(0)-x_j(0)|.
    \end{split}
    \end{equation}
Let us fix in \eqref{cin} any pair $(i,j)$ such that $|x_i(0)-x_j(0)|\leq r$. Such pairs exist since the ensemble is $r$-densely packed. On $[0,T^*)$ the ensemble is densely packed and thus, by Lemmas \ref{con} and \ref{flock} we have
    \begin{align*}
    |x_i(t)-x_j(t)|\leq 2\int_0^\infty e^{-M_*\lambda t}|v_i(0)-\vv| + r = \frac{2}{M_*\lambda} + r.
    \end{align*}
    Therefore if
    \begin{align*}
    M_* > \frac{2}{\lambda(\delta-r)},
    \end{align*}
    then $|x_i(t)-x_j(t)|<r_*\leq\delta$ for all $t\in[0,T^*]$. This implies that any two particles of distance not greater than $r$ initially are of distance at most $r_*$ from one another. It is therefore easy to see that for all $t\in[0,T^*]$ the $r_*$-neighborhood of the ensemble is connected and the number of particles in each $B(x_i,r_*)$ is larger than $m$. Thus the ensemble is $r_*$-densely packed in  $[0,T^*]$. In particular, the ensemble is $r_*$-densely packed at $T^*$ and, again by boundedness of the velocity, we can find $\epsilon>0$ such that the ensemble is $\delta$-densely packed in $[0,T^*+\epsilon)$, which contradicts the assumption that $T^*<+\infty$. Consequently $T^*=\infty$ and the ensemble is $\delta$-densely packed, strongly and symmetrically connected and it flocks exponentially fast to the average velocity, which is constant. The proof is finished.
\end{proof}

\section{Clusters' interaction}\label{sec:exam}

In what follows we perform the analysis of clusters' interactions as introduced at the end of Section \ref{sec:main}. For this sake we consider a simple yet significant
setting of cluster interactions, where one cluster (${\mathcal C}$) is a single particle ($c$). 
In such a case the three scenarios  read:
\begin{enumerate}[\textnumero 1]
\item \underline{Stability}: Cluster $\A$ remains connected. Particle $c$ detaches from $\A$.

\smallskip 

\item \underline{Breaking}: A single particle $b$ from $\A$ detaches from $\A$ under the influence of~$c$.

\smallskip 

\item \underline{Sticking}: Cluster $\A$ changes its momentum under the influence of $c$.
\end{enumerate}

\begin{figure}[h]
    \begin{center}
        \setlength{\unitlength}{.1\textwidth}
        \begin{picture}(10,.1)
        \put(2,0){\line(1,0){6}} 
        \put(3,-.05){\line(0,1){.1}} 
        \put(2.95,-.3){0} 
        \put(2.95,.2){$a$} 
        \put(2.9,0){\circle{0.1}} 
        \put(2.95,0){\circle{0.1}} 
        \put(3,0){\circle{0.1}} 
        \put(3.05,0){\circle{0.1}} 
        \put(3.1,0){\circle{0.1}} 
        \put(4.,0){\circle{0.1}} 
        \put(4,-.05){\line(0,1){.1}} 
        \put(3.95,-.3){$\beta$} 
        \put(3.95,.2){$b$} 
        \put(5,-.05){\line(0,1){.1}} 
        \put(4.95,-.3){$\delta$} 
        \put(5.5,0){\circle{0.1}} 
        \put(5.5,-.05){\line(0,1){.1}} 
        \put(5.45,-.3){$\gamma$} 
        \put(5.45,.2){$c$} 
        \end{picture}
    \end{center}
    
    \caption{Scheme of the considered Cluster setting.}
    \label{fig:1dsetting}
\end{figure}
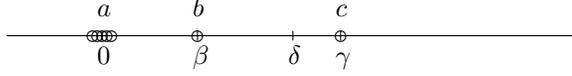

{\it The setting.}

Suppose that we have $N+1>m+1$ particles in $\R^2$ distributed as in Fig.~\ref{fig:1dsetting}.
All of the particles are initially situated on the horizontal $x_1$-axis. 
The left-most $N$~particles form the cluster $\A$. 
Within this cluster the leftmost $N-1$ particles are initially in the position near zero with zero velocities.
The distances between them are significantly smaller then $\delta$. 
We shall refer to all of these particles as $a$ denoting its approximate position as $x_a(0)= 0$.
The rightmost particle ($b$) in cluster $\A$ 
is in the position $x_b(0)=(\beta,0)$ with zero initial velocity and $\beta < \delta$. 
The last particle under considerations, the singleton ($c$) is positioned at $x_c(0) = (\gamma,0)$ with 
$\gamma > \delta$ and  $\gamma - \beta < \delta$; the singleton's initial velocity is $(0,v_c)$.
Since we assume that $0<h\ll\beta,\delta,\gamma,r,1/N$, for simplicity of the presentation we set $h=0$.

Note that not only is cluster $\A$ $r$-densely packed with $r\leq \delta$ but actually each particle in $\A$
interacts with each other particle.  Moreover, the only non-zero initial
velocity of the entire ensemble is $(0,v_c)$, which implies that any change in the velocity of any one of the
particles can only occur in direction parallel to $(0,v_c)$.

To summarize, we have the following picture:
\begin{equation}\label{id}
\begin{split}
 x_a(0)\approx (0,0), \;\; v_a(0)=(0,0), &\quad \mbox{it describes the set of $N-1$ particles}, \\
 x_b(0)=(\beta,0), \;\; v_b(0)=(0,0), & \quad 0<\beta <\delta,\\
 x_c(0)=(\gamma,0),\;\; v_c(0)=(0,c), & \quad \gamma > \delta \mbox{ \ and \ } \gamma - \beta < \delta,
\end{split}
\end{equation}
with $x_a$ serving as a stand-in for all $x_i$ with $i$ in the left-most cluster $a$. Then, for $t>0$ we let the
particles follow the protocol \eqref{eq1}. In what follows we successively reformulate and simplify \eqref{eq1}
in the configuration \eqref{id} leading to a formulation that can be expressed in terms of $v_c$. First note that
configuration \eqref{id} leads to a~characterization of the neighbor sets:
\begin{align*}
\cN_a(0) = A = a\cup\{b\},\quad \cN_b(0) = A\cup\{c\},\quad \cN_c = \emptyset, 
\end{align*}
which implies that \eqref{eq1} takes the form:

\begin{equation}\label{sys0}
\begin{split}
\dot v_a &= M(v_b-v_a),\\
\dot v_b &=M(N-1)(v_a-v_b) + M(v_c-v_b),\\
\dot v_c &=0.
\end{split}
\end{equation}

Since the motion can occur only in directions parallel to $(0,v_c)$, we assume, without a loss of generality,
that the velocities appearing in \eqref{sys0} are scalar. Moreover, through a simple scaling argument
(e.g. $\frac{\dd}{\dd t}\bar{v}_a(t) = \frac{\dd}{\dd t} v_a(Mt)$ etc.) we may further simplify assuming
that $M=1$. Then \eqref{sys0} reduces to a scalar ODE
\begin{equation}\label{sys1}
 \begin{split}
  \dot v_a &= -(v_a-v_b),\\
  \dot v_b &= -(N-1)(v_b-v_a) - (v_b-v_c),
 \end{split}
\end{equation}
with initial data $v_a(0)=0,v_b(0)=0$ and a constant $v_c$.
Then
\begin{equation*}
 \frac{\dd}{\dd t}(v_b-v_a) =-N(v_b-v_a) - (v_b-v_c),
\end{equation*}
which, with constant $v_c$, implies
\begin{equation}\label{x3}
 (v_b-v_a)(t)=-e^{-Nt}\int_0^t e^{Ns}(v_b(s)-v_c)\dd s.
\end{equation}
Inserting (\ref{x3}) to (\ref{sys1}) yields
\begin{equation*}
 \dot v_b = - (N-1)\left(-e^{Nt}\int_0^t e^{Ns}(v_b(s)-v_c)\dd s\right) - (v_b-v_c).
\end{equation*}
Thus $v_b$ satisfies the second order ODE
\begin{equation*}
 \ddot v_b + (N+1)\dot v_b + v_b-v_c =0
\end{equation*}
with initial data $v_b(0)=0$ and $\dot v_b(0)=v_c$. Since  $v_c$ is a constant, introducing 
\begin{equation*}
 V_b=v_b-v_c \mbox{ with } V_b(0)=-v_c\text{ and } \dot V_b(0)=v_c,
\end{equation*}
we obtain
\begin{equation*}
 \ddot V_b+(N+1)\dot V_b + V_b=0.
\end{equation*}
By solving the above ODE, we find that
\begin{equation*}
 V_b(t)=V_1e^{-\lambda_1 t} + V_2e^{-\lambda_2t},
\end{equation*}
where
\begin{align*}
 \lambda_1 &= \frac{-(N+1) - \sqrt{ (N+1)^2 -4} }{2} \approx -(N+1),\\
 \lambda_2 &= \frac{-(N+1) + \sqrt{ (N+1)^2 - 4}}{2} \approx -(N+1)^{-1}.
\end{align*}
Here $\approx$ is the {\it asymptotic equality} as $N\to\infty$.

We recover $V_1$ and $V_2$ from the initial data by taking
 $V_1+V_2=-v_c \mbox{ \ and \ } -\lambda_1 V_1 -\lambda_2 V_2 = v_c.$
Therefore
\begin{equation*}
 V_1=v_c\frac{1-\lambda_2}{\lambda_2-\lambda_1} \approx v_c\frac1N \mbox{ \ \  and \ \ } V_2=v_c \frac{\lambda_1 -1}{\lambda_2-\lambda_1}\approx -v_c(1+\frac1N).
\end{equation*}

Hence we obtain the explicit formula on approximate $V_b(t)$:
\begin{equation}\label{conf}
\boxed{ V_b(t)\approx v_c \frac1N e^{-(N+1)t} - v_c\left(1+\frac 1N\right) e^{-(N+1)^{-1}t}\approx - v_c\left(1+\frac 1N\right) e^{-(N+1)^{-1}t}.}
\end{equation}

\bigskip
{\it The three scenarios.}

Our next goal is to obtain three distinct aforementioned scenarios within the configuration expressed by
\eqref{conf}. First we aim to distinguish scenarios \textnumero 1 and \textnumero 2 (the {\it stability} 
and {\it breaking} scenarios), with particle $c$ either failing to drastically change the structure 
of the cluster $\A$ or scooping particle $b$ out of the cluster $\A$. 
Assume that $N$ is large. By \eqref{conf},
For large $N$, \eqref{conf} gives
\begin{align}\label{v}
v_b-v_c = V_b \approx -v_c e^{-(N+1)^{-1}t},
\end{align}
thus for any $T>0$ we have
\begin{equation*}
\begin{split}
|x_b(T)-x_c(T)| = & |\gamma -\beta| + \left|\int_0^T (v_b(t)-v_c) \dd t\right| \\ &\approx |\gamma-\beta| +\left|\int_0^T -v_c e^{-(N+1)^{-1}s} \dd s\right| \approx |\gamma-\beta| +|v_cT|
\end{split}
\end{equation*}
and
\begin{equation*}
|x_a(T)-x_b(T)|\approx |\beta| + \left|v_c \int_0^T e^{-Nt}\int_0^t e^{Ns} e^{-(N+1)^{-1}s} \dd s\right|\approx |\beta| +\left|\frac{v_cT}{N}\right|.
\end{equation*}
The above equations imply that if $t=T$ is the breaking point for $c\in \cN_b(t)$ i.e.
\begin{align*}
T=\sup\big\{t: c\in \cN_b(s)\quad \mbox{for all}\quad s\in[0,t)\big\}
\end{align*}
or the breaking point for $b\in \cN_a(t)$ it needs to satisfy
\begin{equation}\label{break}
 |\gamma-\beta| +|v_c T|=\delta \mbox{ \ or \ } |\beta| +\left|\frac{v_c T}{N}\right|=\delta,
\end{equation}
respectively. Then scenarios \textnumero 1 and \textnumero 2 are distinguished as follows:

\bigskip

\begin{enumerate}[(\textnumero 1)]
\item With $\beta = \delta/2$ and $\gamma = \delta$ so that $\gamma-\beta = \delta/2$ in \eqref{break} we have
\begin{align*}
\delta = |\gamma-\beta| +|v_c T| = \frac{\delta}{2} + |v_c T| > \frac{\delta}{2} + \left|\frac{v_c T}{N}\right| = |\beta| +\left|\frac{v_c T}{N}\right|.
\end{align*}
The above implies that at $T$ the $c$ particle breaks out from $\cN_b$ while cluster $\A$ itself remains unaffected (in particular $b\in \cN_a$ and $a\in \cN_b$), provided that $N$ is large. Thus we obtain the {\it stability} scenario \textnumero 1.

\medskip 

\item With $\beta = \delta-\epsilon$ and $\gamma = \delta$ so that $\gamma-\beta = \epsilon$ with $0<\epsilon\ll\delta$ we have
\begin{align*}
\delta-\epsilon+\left|\frac{v_c T}{N}\right|= |\beta| +\left|\frac{v_c T}{N}\right|=\delta,
\end{align*}
if and only if
\begin{align}\label{ep2}
\epsilon=\left|\frac{v_c T}{N}\right|.
\end{align}
On the other hand, taking the above into the account yields
\begin{align*}
|\beta| +\left|\frac{v_c T}{N}\right| = \delta-\epsilon + \epsilon > \epsilon + |v_c T| = |\gamma-\beta| +|v_c T|
\end{align*}
which holds if $|v_c|$ satisfies
\begin{align}\label{ep1}
T|v_c|\left(\frac{1}{N}+1\right) < \delta.
\end{align}
If the above conditions are satisfied, the breaking point for $b\in \cN_a$ comes sooner than the breaking point for $c\in\cN_b$ and thus the $b$ particle is removed from the cluster $\A$, leading to the {\it breaking} \textnumero 2 scenario.

\medskip 

\item
The last, {\it sticking} \textnumero 3 scenario requires more preparation. For a fixed $N$ and small $v_c$, we recall \eqref{v} to see that
\begin{equation*}
V_b(t) = v_b(t)-v_c \approx -v_c e^{-\frac1N t}
\end{equation*}
And recalling from \eqref{x3} that
$$
v_b(t)-v_a(t) = -e^{-Nt} \int_0^t e^{Ns}V_b(s) ds \approx v_c\frac1N e^{-\frac1N t}.
$$
Integration of the above two equations leads to the uniform-in-time upper-bounds on the relative distance between the sub-clusters
$$
\sup_{t\geq 0}|x_c(t)-x_b(t)| \leq |\gamma - \beta| + \left|\int_0^\infty V_b(t) \dd t\right| \approx |\gamma -\beta| + |v_cN| 
$$
and
$$
\sup_{t\geq 0}|x_a(t)-x_b(t)| \leq |\beta|+ \left| \int_0^t (v_b-v_a)(s)\dd s\right| \approx |\beta| + |v_c|.
$$

Hence, preservation of the cluster's structure is ensured by taking
\begin{equation}\label{ep3}
 |\gamma -\beta| + |v_cN|  \leq \delta \mbox{ \  and \ }  |\beta| + |v_c|\leq \delta,
\end{equation}

which holds for small $v_c$. Such conditions ensure that the distance between the leftmost sub-cluster $a$ and the middle particle $b$ remains smaller than $\delta$ indefinitely; in other words cluster $\A$ is preserved for all $t>0$. Furthermore the distance between the middle particle $b$ and the rightmost singleton $c$ is also smaller than $\delta$ for all times. Hence $\A\cup\{c\}$ is a connected cluster. The connectivity of $\A\cup\{c\}$ is weak in the sense that while $\A$ is strongly and symmetrically connected (it is in fact $\delta$-densely packed) and $c$ influences $\A$ through the middle particle $b$, there is no influence directed from $\A$ to $c$ ($\cN_c = \emptyset$).

\end{enumerate}

\begin{rem}\rm
Let us describe how the total momentum (equal to the sum of velocities, since $M=1$) of the $N+1$ particles evolves in each case \textnumero 1 - \textnumero 3.
By the above considerations one has:
\begin{equation}
 v_b\approx v_c(1-e^{-T/N}) \mbox{ and } v_a\approx v_c(1-e^{T/N} - \frac{1}{N}e^{-T/N})
\end{equation}
In the first scenario the time of separation is of order 
$T \approx \frac{\delta}{v_c}$. Thus the momentum of the cluster increases (here $N+1\approx N$):
\begin{multline}
Mom:= v_b +N v_a = v_c (1-e^{-T/N}) + N v_c(1-e^{-T/N} -\frac{1}{N}e^{-T/N})\approx \\
v_c  \frac{T}{N} + Nv_c \frac{T}{N} + v_c e^{-T/N} \approx  \delta + {v_c} e^{-\delta/ (Nv_c)}
\end{multline}
In the second case by (\ref{ep2}), (\ref{ep1}) $T\approx \frac{\epsilon N}{v_c}$, but $N\epsilon <\delta$ and we get almost the same ($T \approx \frac{\delta}{v_c}$)
\begin{equation}
 Mom:= \delta + v_c e^{-\delta/ (Nv_c)}.
\end{equation}
In the last case, the time of separation $T$ is infinity, all velocities are reaching $v_c$, so by  condition  (\ref{ep3})
$ Mom:=Nv_c \approx \delta.$

Interestingly enough, in each of the scenarios \textnumero 1 - \textnumero 3 the growth of the momentum is almost the same.

\end{rem}

\section{Simulations}\label{sec:sim}

In this section we present the results of numerical simulation of the 
DI model in several different cases. The aim is twofold. First we 
compare the model with related models. Second we illustrate its unique features, 
and provide insight to the theoretical results of this work.



Let us recall that the DI model is characterized by the set of parameters $(N, M, m, \delta)$. 
Together with the volume of the domain, which in the case of two-dimensional square equals $L^2$, we 
can transform $N, m, \delta$ and $L$ to the average particle density $\rho_a$ and the local minimal 
interaction density $\rho_m$, namely 
$$\rho_a = \frac{N}{\bar L^2},\quad \rho_m = \frac{m}{\pi \delta^2} .$$

For the fixed $L  = 25$ in the case of cluster formations (Sect. \ref{sec:case1}) we choose 
$(N,m,\delta)=(64,3,2)$. In this example $\rho_m = 2.3\rho_a$  , thus the necessary condition for a 
particle interaction is to locally reach particle density that is more than two times of the 
average, however in practice since the values are discrete its in fact three times more.

Computational domain is a two-dimensional square with the periodic boundary condition. Periodicity 
is implemented by introducing domain extension of the size $\delta$ and ghost particle technique.

For the sake of visualisation we construct a directed graph that represents interactions $G=(V, {\mathcal E},\PP)$,
see Section~\ref{sec:prelim}. In each time-step we identify clusters as a strongly connected components 
of graph $G$, and prescribe them a color from predefined list of colors. 
To this end we apply an algorithm \emph{connected\_components} implemented  in \emph{scipy.sparse.csgraph} \cite{scipy}.

\paragraph{Numerical scheme\\}

As a numerical method we choose the classic 4th-order Runge-Kutta method (RK4), that is one of the most widely used method to solve ODEs. The application of RK4 algorithm to system \eqref{eq1}, where we renamed the rhs in \eqref{eq1b} as $a$, yield the following numerical scheme

\begin{subnumcases}{\label{rk}}
v^{n+1}_i=v^n_i + \frac{1}{6}\left(k_{v1}+ 2k_{v2} + 2k_{v3}+ k_{v4}\right),\label{rkv}\\
x^{n+1}_i=x^n_i + \frac{1}{6}\left(k_{x1}+ 2k_{x2} + 2k_{x3}+ k_{x4}\right),\label{rkx}
\end{subnumcases}
where 
\begin{align*}   
  k_{v1} &= a(x_i^n,v_i^n) \dt ,\quad & & k_{x1} = v_i^n \dt,   \\
  k_{v2} &= a(x_i^n+ k_{x1}/2 ,v_i^n+ k_{v1}/2) \dt ,\quad & & k_{x2} = \left(v_i^n + k_{v1}/2\right) \dt,   \\
  k_{v3} &= a(x_i^n+ k_{x2}/2 ,v_i^n+ k_{v2}/2) \dt ,\quad & & k_{x3} = \left(v_i^n + k_{v2}/2\right) \dt,   \\
  k_{v4} &= a(x_i^n+ k_{x3} ,v_i^n+ k_{v3}) \dt ,\quad & & k_{x4} = \left(v_i^n + k_{v3}\right) \dt   .
\end{align*}

\subsection{Cluster formations}\label{sec:case1}

In the first scenario we show spontaneous behaviour of particles in time. Starting from $N$ particles that are randomly distributed and initially placed with a certain distance from the domain boundary. We observe particle interactions and clusters formations based on local particle density.

Moreover, we present comparison with the Cucker-Smale (CS) type models. Beyond the classical CS 
model, two local versions of CS model are considered: CS$_\delta$ where any particle interacts only 
within the ball of radius $\delta$  and CS$_q$ where any particle interacts with $q-$closest 
neighbours. The weight function for CS models is given by $\psi(s) = 1/\sqrt{1+|s|}$.

The initial velocity is equal $v_i(0) = r_i(\cos(\alpha_i), \sin(\alpha_i)) + f_i , \forall i = 
\{1..N\}$ with $r_i$ and $\alpha_i$ being random numbers taken from the uniform distributions 
$U([0,1])$ and $U([0,2\pi])$ respectively. Moreover, in order to avoid the average velocity being 
zero, for half of the particles $i\le N/2$ we introduce the fixed contribution $f_i$ that is equal 
to $f_i = r_i(0.5,1)$.

For the sake of the comparison of different models we put $M$ in such a way that it is always equal 
to one over the number of particles within a cluster, c.f. Section~\ref{sec:nM}.
Tab.~\ref{tab:case1} summarizes considered models with corresponding set of parameters.

\begin{table}[h!]
    \centering
    \begin{tabular}{clccccc}
        \toprule
        &  \text{model} & $N$ & $M$ & $m$ & $\delta$ & $q$   \\	\midrule 
        (A) & DI   & 64 & $1/\# \cN_i$ & 3 & 2  & \\
        (B) & CS$_{\delta}$   & 64 & $1/\# \cN_i$ &  & 2 &  \\
        (C) & CS$_{q}$   & 64 & $1/q$ &  & $\infty$ & 3 \\
        (D) & CS   & 64 & $1/N$ &  & $\infty$ & \\
        \bottomrule 
    \end{tabular} 
    \caption{Summary of models considered in Section \ref{sec:case1}.}
    \label{tab:case1}
\end{table}

Fig.~\ref{fig:case1} presents the evolution of particles governed by DI, CS$_\delta$, CS$_q$ and 
CS models at $t=0$, $t=30$ and $t=150$. We prescribe the same colour for particles that interact 
with each other at least indirectly. In all of the cases we observe clustering and velocity 
alignment.

For DI and CS$_\delta$ we have the same range of interaction $\delta=2$, but DI model on the top 
of the geometrical condition requires more that 3 particles for the activation of interaction. This 
two models form densely-packed clusters.

Global interaction models CS$_q$ and CS do not form clusters (or more precisely the entire 
ensemble is a single cluster). Interestingly, for this particular random initial condition, the 
relatively small number of closest neighbours $q=3$ is enough to propagate interactions throughout 
the entire ensemble.

Moreover, in Fig.~\ref{fig:case1v} the maximal relative velocity between the particles $V(t)$, 
c.f. \eqref{V}, is presented, showcasing regions of sharp decay for the DI model.


\begin{figure}[h!]
    \setlength{\unitlength}{.1\textwidth}
    \begin{picture}(10,12)
    \put(.5,0){\includegraphics[width=.9\textwidth]{./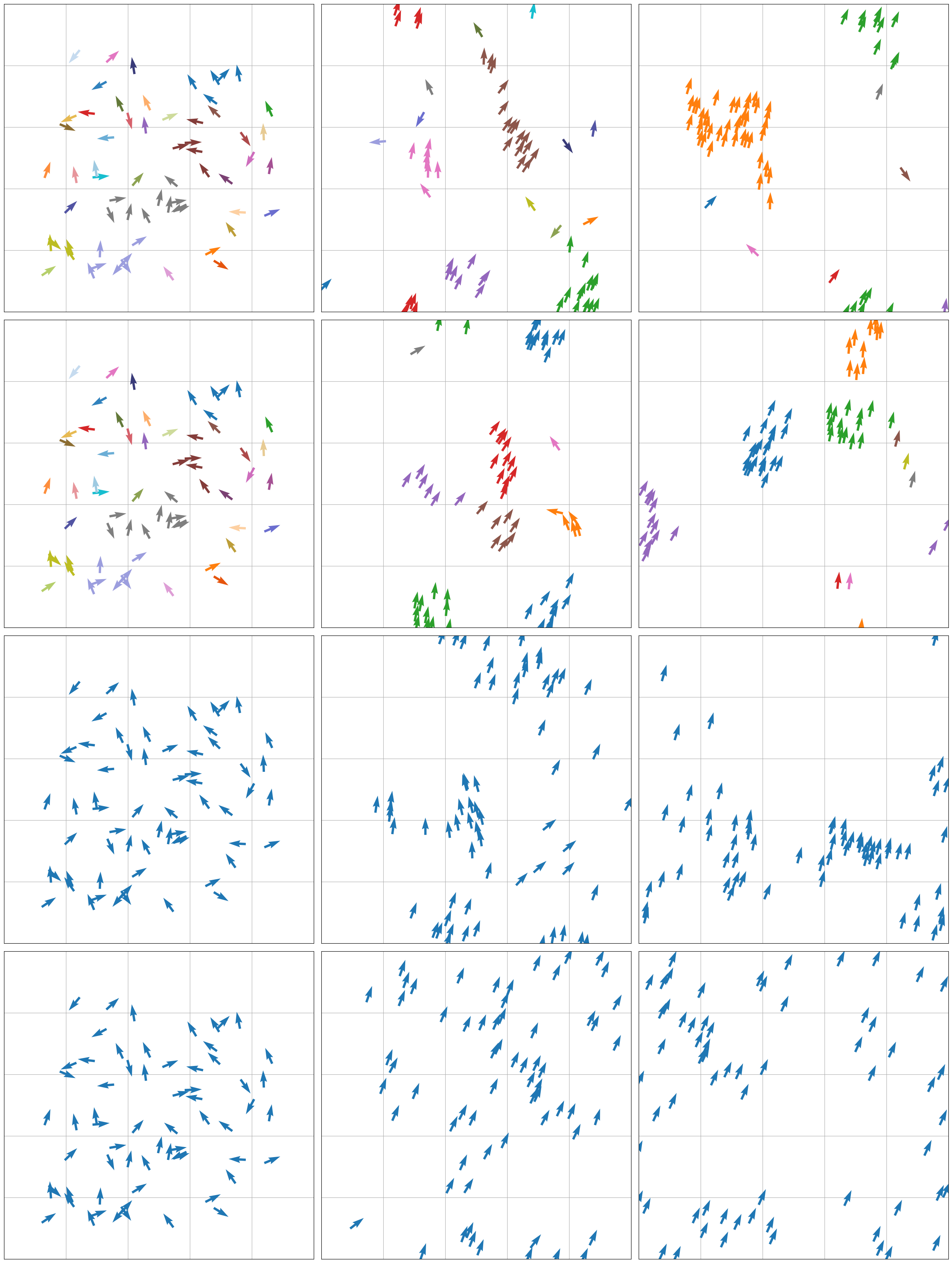}}
    \put(0,11.85){(A)}
    \put(0,8.8){(B)}
    \put(0,5.75){(C)}
    \put(0,2.7){(D)}
  
    \put(1.2,12.2){time = 0.0}
    \put(4.2,12.2){time = 30.0}
    \put(7.2,12.2){time = 150.0}
    
    \end{picture}
    \caption{Spontaneous behaviour of particles for DI(A), CS$_\delta$(B), CS$_q$(C), and CS(D), see Tab.~\ref{tab:case1}.}
    \label{fig:case1}
\end{figure}

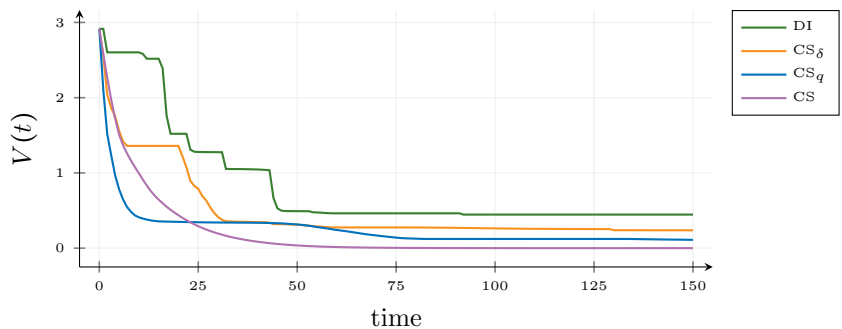
\begin{figure}[h!]
    \begin{center}
    \begin{tikzpicture}
    \begin{axis}[
    title = {},
    width = 10cm,
    height = 5cm,
    axis x line=bottom, axis y line=left,
    enlarge x limits={abs=5},
    enlarge y limits={abs=0.25},
    xlabel={time},
    ylabel={$V(t)$},
    tickpos=left,
    tick style = {semithick},
    grid=both,
    grid style={line width=.5pt, draw=gray!10},
    legend pos= outer north east,
    legend cell align={left},
    xtick={0.0, 25,  50, 75, 100, 125, 150},
    yticklabel style = {font=\tiny},
    xticklabel style = {font=\tiny},
    legend style={font=\tiny}
    ]
    \addplot[color= OliveGreen, line width=.8pt 
    ] table [x=time, y=V, col sep=tab] {./results/di.dat};
    \addlegendentry{ DI}
    \addplot[color= BurntOrange, line width=.8pt
    ] table [x=time, y=V, col sep=tab] {./results/csd.dat};
    \addlegendentry{ CS$_\delta  $}
    \addplot[color= RoyalBlue, line width=.8pt
    ] table [x=time, y=V, col sep=tab] {./results/csq.dat};
    \addlegendentry{ CS$_q  $}
    \addplot[color= Orchid, line width=.8pt
    ] table [x=time, y=V, col sep=tab] {./results/cs.dat};
    \addlegendentry{ CS   }
    \end{axis}
    \end{tikzpicture}
    \end{center}

   \caption{The maximal relative velocity between the particles for DI, CS$_\delta$, CS$_q$ and CS models.}
   \label{fig:case1v}
\end{figure}

\subsection{Sharp change of the total momentum}\label{sec:52}

Following Fig.~\ref{fig:2} we investigate the phenomenon of a single particle diverting an entire 
cluster and its dependence on the spatial distribution of particles within the cluster.
This scenario has been already addressed in the introduction. A cluster of $28$ particles moves to 
the right with velocity $(0.1,0)$, while a single particle moves in the opposite direction with 
velocity $(2.7,0)$, facing the cluster. Thus the total momentum (equal to the sum of velocities) of 
the particles is $0.1$.

We compare cases (A) and (B), c.f. Fig.~\ref{fig:2} and determine whether the total 
momentum changes its sign for the DI, 
CS$_\delta$, CS$_q$ and CS models with parameters $M$, $m$, $\delta$ and $q$ as in 
Tab.~\ref{tab:case1}. The results are showcased in Fig.~\ref{fig:case2v}. 
As expected, in the case of the CS model the total momentum is constant in both 
(A) and (B) cases. The total momentum for the 
CS$_\delta$ model changes (note that this is due to the non-symmetric normalization of the interaction 
$M=1/\#\cN_i$) but it remains positive. In the case of the CS$_q$ model the total momentum 
changes sign in both (A) and (B) cases. Finally, the DI model is the only one exhibiting a sharp 
distinction between (A) and (B). In (A) the total momentum changes its sign and in (B) -- it grows 
greater than for any other considered model.

\begin{figure}[h!]
    \begin{center}
        \begin{tikzpicture}
        \begin{axis}[
        title = {},
        width = 10cm,
        height = 5cm,
        axis x line=bottom, axis y line=left,
        enlarge x limits={abs=1},
        xlabel={time},
        ylabel={$\sum_i v_{i_x}(t)$},
        tickpos=left,
        tick style = {semithick},
        grid=both,
        grid style={line width=.5pt, draw=gray!10},
        legend pos= outer north east,
        legend cell align={left},
        yticklabel style = {font=\tiny},
        xticklabel style = {font=\tiny},
        legend style={font=\tiny}
        ]
              \addplot[color= Orchid, line width=.8pt, dotted
      ] table [x=time, y expr={28*\thisrow{avgx} }, col sep=tab] {./results/DIa.dat};
      \addlegendentry{ DI (A)}
      \addplot[color= Orchid, line width=.8pt, dash pattern=on 3pt off 9pt,
      ] table [x=time, y expr={28*\thisrow{avgx} }, col sep=tab] {./results/DIb.dat};
      \addlegendentry{ DI (B)   }
      
      \addplot[color= OliveGreen, line width=.8pt, dotted
      ] table [x=time, y expr={28*\thisrow{avgx} }, col sep=tab] {./results/CSda.dat};
      \addlegendentry{ CS$_\delta$ (A)}
      \addplot[color= OliveGreen, line width=.8pt, dash pattern=on 3pt off 9pt,
      ] table [x=time, y expr={28*\thisrow{avgx} }, col sep=tab] {./results/CSdb.dat};
      \addlegendentry{ CS$_\delta$ (B) }
      
      \addplot[color= BurntOrange, line width=.8pt, dotted 
      ] table [x=time, y expr={28*\thisrow{avgx} }, col sep=tab] {./results/CSqa.dat};
      \addlegendentry{ CS$_q$ (A)}
      \addplot[color= BurntOrange, line width=.8pt, dash pattern=on 3pt off 9pt,
      ] table [x=time, y expr={28*\thisrow{avgx} }, col sep=tab] {./results/CSqb.dat};
      \addlegendentry{ CS$_q$ (B) }
      
      \addplot[color= RoyalBlue, line width=.8pt, dotted
      ] table [x=time, y expr={28*\thisrow{avgx} }, col sep=tab] {./results/CSa.dat};
      \addlegendentry{ CS  (A)}
      \addplot[color= RoyalBlue, line width=.8pt, dash pattern=on 3pt off 9pt,
      ] table [x=time, y expr={28*\thisrow{avgx} }, col sep=tab] {./results/CSb.dat};
      \addlegendentry{ CS (B) }
      
        \end{axis}
        \end{tikzpicture}
    \end{center}
    
    \caption{Sum of horizontal velocities of particles for DI, CS$_\delta$, CS$_q$ and CS models.}
    \label{fig:case2v}
    
\end{figure}
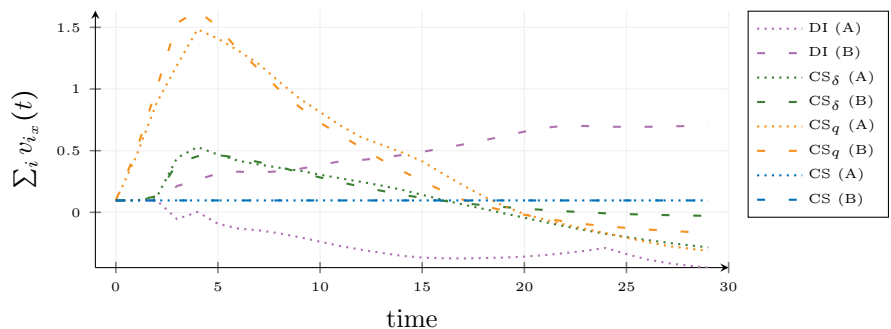

\subsection{Chain }\label{sec:case2}

In this scenario we present an interaction between a chain of particles and an individual. The chain 
consists of 21 particles that are equally distributed in the vertical direction. The initial 
velocity of the chain is small in comparison with the initial velocity of single particle and they 
are directed facing each other.

The initial velocity equals $v_0 = (0.1,0)$ and  $v_0 = (-8,0)$ for the chain and the individual, 
respectively. The DI model parameters $m=3$ and $M=1$, and we consider three scenarios that differs 
with parameter $\delta$ that equals $2$, $3$, $4$, cf. (A), (B), and (C) in 
Fig.~\ref{fig:case2} respectively. Since $\delta$ describes the interaction range we observe three 
different behaviours. In the case $\delta=2$, the cluster splits, since the single particle is 
causing activation of interaction, and due the velocity difference it takes cluster with itself.
For the second scenario, $\delta=3$, the chain deforms, but is able to turn back the singe agent. In 
the third example, with $\delta=4$, the chain is very stiff, and the single particle pushes the 
entire cluster.

\begin{figure}[h!]
    \setlength{\unitlength}{.1\textwidth}
    \begin{picture}(10,9)
    \put(.5,0){\includegraphics[width=.9\textwidth]{./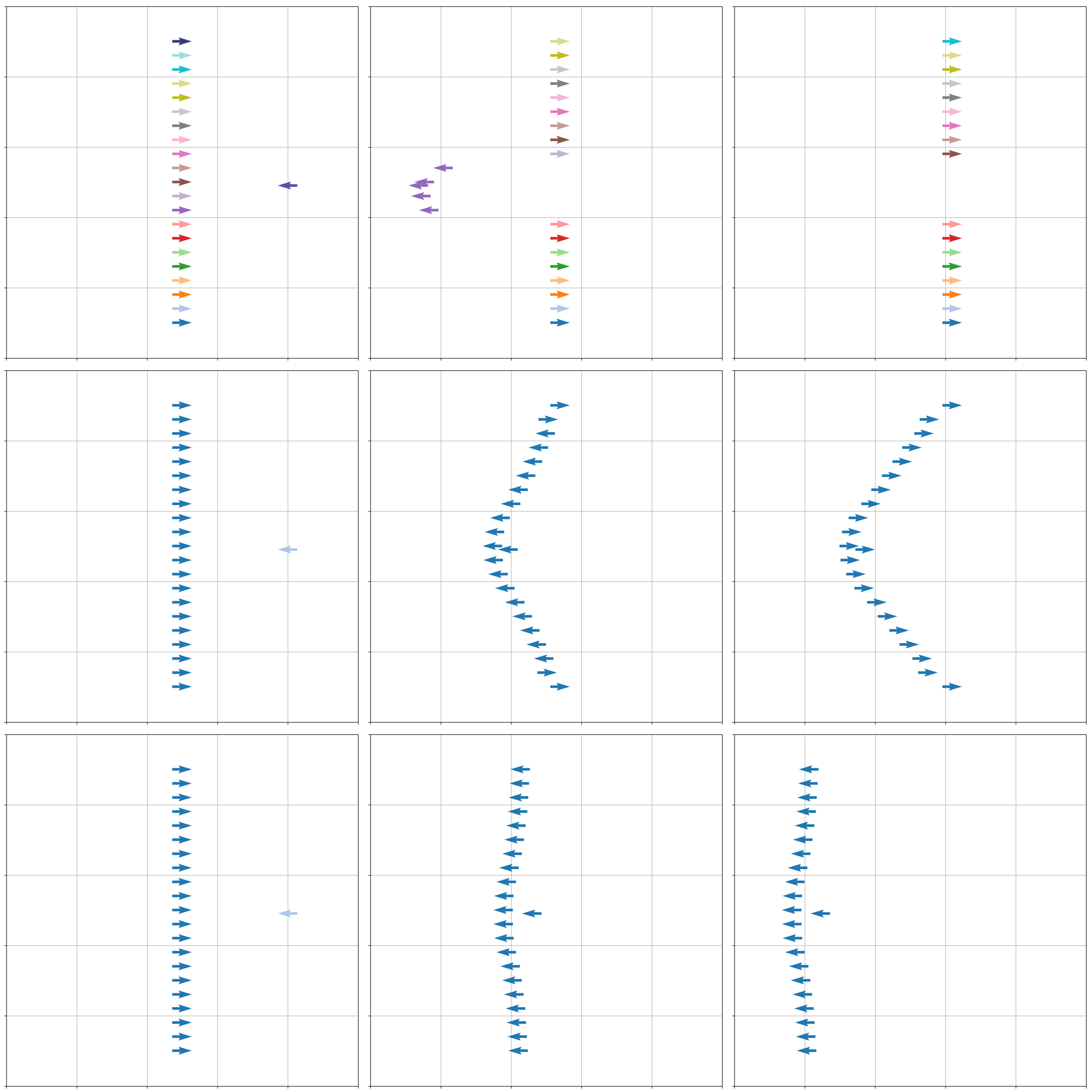}}
    \put(0,8.8){(A)}
    \put(0,5.75){(B)}
    \put(0,2.7){(C)}
    
    \put(1.2,9.2){time = 0.0}
    \put(4.2,9.2){time = 10.0}
    \put(7.2,9.2){time = 30.0}
    
    \end{picture}
    
    \caption{Interaction between chain and individual particle for DI$(8,\delta,1,3)$ model for $\delta=2$ (A), $\delta=3$ (B) and $\delta=4$ (C).}
    \label{fig:case2}
\end{figure}

{\bf Supplement Materials.} Source code to reproduce simulations is available at~\cite{sourcecodeDI}.

\bigskip

{\bf Acknowledgement.} PM acknowledges the financial support by the Federal Ministry of
Education and Research of Germany, grant number 05M16NMA as well as
the GRK 2297 MathCoRe, funded by the Deutsche Forschungsgemeinschaft, 
grant number 314838170. PBM and JP have been partly supported by the Narodowe Centrum Nauki grant 
\textnumero 2018/30/M/ST1/00340 (HARMONIA). JP was additionally partially supported by the Narodowe 
Centrum Nauki grant \textnumero 2018/31/D/ST1/02313 (SONATA).

\bibliographystyle{abbrv}
\bibliography{l2}

\end{document}